\documentclass{amsart}
\usepackage[marginratio=1:1,height=584pt,width=390pt,tmargin=117pt]{geometry}
\usepackage[pdftex]{graphicx}
\usepackage{amssymb}
\usepackage{amsmath}
\usepackage{float}
\usepackage{placeins}
\usepackage[pdftex,dvipsnames]{xcolor} 
\usepackage{array}
\usepackage{calc}
\usepackage[ruled,noline]{algorithm2e}
\SetKwInOut{Input}{Input} 
\SetKwInOut{Output}{Output}
\usepackage[font=small,labelfont=bf]{caption}
\usepackage{subcaption}
\captionsetup{compatibility=false}
\DeclareGraphicsExtensions{.eps}
\usepackage[outdir=./pictures/]{epstopdf}

\epstopdfsetup{
	suffix=,
}

\newcolumntype{?}{!{\vrule width 1.5pt}}

\def\BState{\State\hskip-\ALG@thistlm}

\usepackage[colorlinks=true,
           linkcolor=red,
            urlcolor=blue,
            citecolor=black]{hyperref}

\numberwithin{equation}{section}

\theoremstyle{plain}
\newtheorem{thm}{Theorem}[section]

\newtheorem{prop}[thm]{Proposition}

\theoremstyle{definition}

\theoremstyle{remark}
\newtheorem*{rem}{Remark}

\newcommand{\RR}{\mathbb{R}}

\newcommand*{\FF}[1]{\mathfrak{F}_{#1}} 
\newcommand*{\pp}[1]{\mathcal{P}_{\Omega_{#1}}} 

\newcommand*{\Rgf}[1]{\Gamma_R^{({#1})}} 

\newcommand*{\arr}{\longrightarrow}
\newcommand{\wh}[1]{\widehat{#1}}

\renewcommand{\Re}{\operatorname{Re}}             
\renewcommand{\Im}{\operatorname{Im}}             

\newcommand*{\ep}{\varepsilon}

\newcommand{\p}{\partial}
\newcommand{\ds}{\displaystyle}

\newcommand{\figref}[1]{\figurename~\ref{#1}}


\numberwithin{equation}{section}		
\numberwithin{figure}{section}			
\numberwithin{table}{section}				

\author{Andrea Scapin}
\address{Department of Mathematics, 
ETH Z\"urich, 
R\"amistrasse 101, CH-8092 Z\"urich, Switzerland.}
\email{andrea.scapin@sam.math.ethz.ch}
\title[Electrocommunication for weakly electric fish]{Electrocommunication for weakly electric fish}
\thanks{\footnotesize  
This work was supported by the SNF grant 200021-172483.}
\subjclass[2010]{35R30,35J05,31B10,35C20,78A30}

\keywords{weakly electric fish, electro-sensing, tracking, communication}

\begin{document}

\begin{abstract} This paper addresses the problem of the electro-communication for weakly electric fish. In particular we aim at sheding light on
how the fish circumvent the jamming issue for both electro-communication and active electro-sensing. A real-time tracking algorithm is presented.
\end{abstract}

\maketitle

\section{Introduction}

In this paper we address the problem of studying the behaviour of two weakly electric fish when they populate the same environment.
Those kind of animals orient themselves at night in complete darkness by using their active electro-sensing system. They generate a stable, relatively high-frequency, weak electric field and transdermically perceive the corresponding feedback by means of many receptors on its skin. Since they have an electric sense that allows underwater navigation, target classification and intraspecific communication, they are privileged animals for bio-inspiring man-built autonomous systems \cite{caputi,p15,donati,helli,hoshi,p3,wwang,zheng}. 
For electro-communication purposes, in processing sensory information, this system has to separate feedback associated with their own signals from interfering sensations caused by signals from other animals. Wave and pulse species employ different mechanisms to minimize interference with EODs of conspecifics. It has been observed that certain wave species, having wave-type electric organ discharge (EOD) waveforms, such as Eigenmannia and Gymnarchus, reflexively shift their EOD frequency away from interfering frequencies of nearby conspecifics, in order to avoid ``jamming" each others electrical signals.
This phenomenon is known as \emph{jamming avoidance response} (JAR) \cite{bullock,helli2,helli3}. The electro-communication for the weakly electric fish has already been studied in the case of a simplified model consisting of a dipole-dipole interaction \cite{wwang}.

A lot of effort has also been devoted to the electro-sensing problem, that is, the capability of the animal to detect and recognize a dielectric target nearby \cite{p1,p4,p5,p6,p7,p8,p9,p10,p11,p12,p13,p14}. For the mathematical model of the electric fish described in \cite{Am2}, it has been shown that the fish is able to locate a small target by employing a MUSIC-type algorithm based on a multi-frequency approach. Its robustness with respect to
measurement noise and its sensitivity with respect to the number
of frequencies, the number of sensors, and the distance to the
target have also been illustrated.
The classification capabilities of the electric fish have also been investigated. In particular, invariant quantities under rotation, translation and scaling of the target, based on the generalized polarization tensors (GPTs), have been derived and used to identify a small homogeneous target among other shapes of a pre-fixed dictionary \cite{Am3,Am5,Am6,Am8,Am9}. The stability of the identifying procedure has been discussed. The recognition algorithm has been recently extended to sense small inhomogeneous target \cite{scapin}.
In \cite{maciver}, a capacitive sensing method has recently been implemented. It has been shown that the size of a capacitive sphere can be estimated from multi-frequency electrosensory data.
In \cite{faouzi}, uniqueness and
stability estimates to the considered electro-sensing inverse problem have been established.

In the present work we designed and implemented a real-time tracking algorithm for a fish to track another conspecific that is swimming nearby. In particular, we showed  that the following fish can sense the presence of the leading fish and can estimate its positions by using a MUSIC-type algorithm for searching its electric organ. We also showed that the fish can locate a small dielectric target which lies in its electro-sensing range even when another fish is swimming nearby, by filtering out its interfering signal and by applying the MUSIC-type algorithm developed in \cite{Am2}.



The paper is organized as follows. In Section 2, starting from Maxwell's equations in time domain we adapt the mathematical model of the electric fish proposed in \cite{Am2} in order to be able to consider many fish with EOD working at possibly different frequencies. We give a decomposition formula for the potential and, as a consequence, we decouple the dipolar signals of the two fish when they have different EOD fundamental frequencies. The amplitude of each signal can be retrieved from the measurements using Fourier analysis techniques.

In Section 3, we use the decomposition formula for the total signal to tackle the problem in the frequency domain. This allows us to employ a non-iterative MUSIC-type dipole search algorithm for a fish to track another fish of the same species nearby. 


In Section 4, we provide a method for a fish to electro-sense a small dielectric target in the presence of many conspecifics. The aim of this section is to locate the target which making use of the dipolar approximation of the transdermal potential modulations. We show that the multi-frequency MUSIC-type algorithm in \cite{Am6} is still applicable after decomposing the total signal.

In Section 5, many numerical simulations are driven. The performances of the real-time tracking algorithm are reported. We show that the algorithms work well even when the measurements are corrupted by noise.

\section{The two-fish model and the jamming avoidance response}

\noindent Let $\Omega$ be a simply-connected bounded domain. We assume $\Omega \in C^{2, \alpha}$ for some $0 < \alpha < 1$. Given an arbitrary function $w$ defined on $\RR^2 \setminus \p \Omega$ and $x \in \p \Omega$,  we define
\begin{align*} w(x)|_{\pm} & := \lim_{t \to 0} w(x \pm  t\nu(x)) ,\\
\frac{\p w}{\p \nu} (x) \biggr |_{\pm} & := \lim_{t \to 0} \nabla w(x \pm  t\nu(x)) \cdot \nu(x) ,
\end{align*} 
where $\nu$ is the outward normal to $\p \Omega$.

\noindent Let us denote by $\Gamma$ the fundamental solution of the Laplacian in $\RR^2$, that is,
\begin{equation*} \Gamma(x - y) = \frac{1}{2\pi} \log \| x - y\| , \quad x\ne y \in \RR^2.\end{equation*}
The single- and double-layer potentials on $\Omega$, $\mathcal{S}_{\Omega}$ and $\mathcal{D}_{\Omega}$, are the operators that respectively map any $\phi \in L^2(\p \Omega)$ to  
\begin{align*}
\mathcal{S}_{\Omega}[\phi] (x) &= \int_{\p \Omega} \Gamma (x,y) \phi (y) \mbox{ d} s_y ,\\
\mathcal{D}_{\Omega}[\phi] (x) &= \int_{\p \Omega} \frac{\p \Gamma}{\p \nu_y}(x, y) \phi (y) \mbox{ d} s_y  .
\end{align*}
Recall that for $\phi \in L^2(\p \Omega)$, the functions $\mathcal{S}_{\p \Omega}[\phi]$ and $\mathcal{D}_{\p \Omega}[\phi]$ are harmonic functions in $\RR^2 \setminus \p \Omega$.

\noindent The behaviour of these functions across the boundary $\p \Omega$ is described by the following  relations \cite{Am7}:
\begin{align*}
\mathcal{S}_{\Omega}[\phi] |_+ &= \mathcal{S}_{\Omega}[\phi] |_- ,\\
\frac{\partial \mathcal{S}_{\Omega}[\phi]}{\partial \nu} \biggr |_{\pm} &=  \left ( \pm \frac{1}{2} I + \mathcal{K}_{\Omega}^* \right ) [\phi] ,\\
\mathcal{D}_{\Omega}[\phi] |_{\pm} &=  \left ( \mp \frac{1}{2} I + \mathcal{K}_{\Omega} \right ) [\phi]  ,\\
\frac{\partial \mathcal{D}_{\Omega}[\phi]}{\partial \nu} \biggr |_{+}  & = \frac{\partial \mathcal{D}_{\Omega}[\phi]}{\partial \nu} \biggr |_{-} .
\end{align*}
The operator $\mathcal{K}_{\Omega}$ and its $L^2$-adjoint $\mathcal{K}_{\Omega}^*$ are given by the following formulas:
\begin{align}
\mathcal{K}_{\Omega}[\phi] (x) &:= \frac{1}{2 \pi}  \int_{\p \Omega} \frac{(y - x) \cdot \nu(y)}{|x - y|^2} \phi (y) \mbox{ d} s_y , \qquad x \in \p \Omega,\\
\mathcal{K}_{\Omega}^*[\phi] (x) &:= \frac{1}{2 \pi}  \int_{\p \Omega}\frac{(x - y) \cdot \nu(x)}{|x - y|^2} \phi (y) \mbox{ d} s_y , \qquad x \in \p \Omega .
\end{align}

\medskip

\noindent For the sake of simplicity, we consider the case of two weakly electric fish $\FF{1}$ and $\FF{2}$. The extension to the case of many fish is immediate.

\medskip

\noindent Starting from Maxwell's equations in time domain we derive
\[ \nabla \cdot ( \sigma + \ep \p_t ) E = - \nabla \cdot j_s \;\quad \mbox{in } \RR^2 ,\]
where $\sigma$ is the conductivity of the medium, $\ep$ is the electric permittivity, $E$ is the electric field, $j_s$ is a source of current.
Let $\omega_1$ and $\omega_2$ be the fundamental frequencies associated to the oscillations of the electric organ discharge (EOD) of the two fish $\FF{1}$ and $\FF{2}$, respectively. We consider a source term which is of the form
\[ - \nabla \cdot j_s = e^{i\omega_1 t} f_1(x) + e^{i\omega_2 t} f_2(x) ,\]
where $f_1 = \sum \alpha_j^{(1)} \delta_{x_j^{(1)}}$ and $f_2 = \sum \alpha_j^{(2)} \delta_{x_j^{(2)}}$ are the spatial dipoles located inside $\Omega_1$ and $\Omega_2$, respectively. Throughout this paper we assume that the dipoles $f_1$ and $f_2$ satisfy the local charge neutrality conditions:
\begin{equation*} \alpha_1^{(i)} + \alpha_2^{(i)} = 0 \quad \mbox{ for } i = 1\, ;\end{equation*}
see \cite{Am2}.
Considering the boundary conditions as in \cite{Am2}, we get the following system of equations:
\begin{equation} \label{eq:model_u_timedomain} \begin{cases} \Delta u(x,t) = f_1(x) h_1(t)  & \mbox{in } \Omega_1\times \RR^+ ,\\ \Delta u(x,t) = f_2(x) h_2(t)  & \mbox{in } \Omega_2 \times \RR^+ ,\\ \nabla  \cdot (\sigma(x) + \ep(x) \p_t  )  \nabla  u(x,t) = 0  &  \mbox{in } (\RR^2 \setminus \overline{\Omega_1 \cup \Omega_2} ) \times \RR^+ ,\\ u|_+ - u |_- = \xi_1 \dfrac{\partial u}{\partial \nu} \biggr |_+  &  \mbox{on } \partial \Omega_1 \times \RR^+, \\ u|_+ - u |_- = \xi_2 \dfrac{\partial u}{\partial \nu} \biggr |_+  &  \mbox{on }  \p \Omega_2 \times \RR^+ ,\\ \dfrac{\partial u}{\partial \nu} \biggr |_- = 0   & \mbox{on } \partial \Omega_1 \times \RR^+ ,  \p \Omega_2 \times \RR^+ , \\ |u(x,t)| = O(|x|^{-1})  & \mbox{as } |x| \to \infty, \; t \in \RR^+,\end{cases} \end{equation}
where $\sigma_0, \ep_0$ are the material parameters of the target $D$, and $\xi_1$ and $\xi_2$ are the effective skin thickness parameters of $\mathfrak{F}_1$ and $\mathfrak{F}_2$, respectively. Here, $h_1$ and $h_2$ encode the type of signal generated by the fish.

\subsection{Wave-type fish}

For the wave-type fish we have $h_1(t) =  e^{i \omega_1 t}$ and $h_2(t) = e^{i \omega_2 t}$.

\noindent When $\omega_1 \ne \omega_2$ the overall signal is the superposition of two periodic signals oscillating at different frequencies.

\begin{prop} \label{prop:submodalities} If $\omega_1 \ne \omega_2$ such that $\omega_1 , \omega_2 \ne 0$, then the solution $u$ to the equations \eqref{eq:model_u_timedomain} can be represented as
	\begin{equation} \label{eq:representation_u}  u(x,t) = u_1(x) e^{i \omega_1 t} + u_2(x) e^{i \omega_2 t} ,\end{equation}
	where $u_1 , u_2$ satisfy the following transmission problems:
	\begin{equation} \label{eq:model_u1} \begin{cases} \Delta u_1(x) = f_1(x)   & \mbox{in } \Omega_1 ,\\ \Delta u_1 (x) = 0 & \mbox{in } \Omega_2 ,\\ \nabla  \cdot (\sigma(x) + i \omega_1 \ep(x)  )  \nabla  u_1(x) = 0  &  \mbox{in } \RR^2 \setminus \overline{\Omega_1 \cup \Omega_2}   ,\\ u_1|_+ - u_1 |_- = \xi_1 \dfrac{\partial u_1}{\partial \nu} \biggr |_+  &  \mbox{on } \partial \Omega_1 ,\\ u_1|_+ - u_1 |_- = \xi_2 \dfrac{\partial u_1}{\partial \nu} \biggr |_+  &  \mbox{on }  \p \Omega_2 ,\\ \dfrac{\partial u_1}{\partial \nu} \biggr |_- = 0   & \mbox{on } \partial \Omega_1  ,  \p \Omega_2  , \\ |u_1(x)| = O(|x|^{-1})  & \mbox{as } |x| \to \infty,\end{cases} \end{equation}
	and
	\begin{equation} \label{eq:model_u2} \begin{cases} \Delta u_2(x) = 0  & \mbox{in } \Omega_1 ,\\ \Delta u_2 (x) = f_2(x) & \mbox{in } \Omega_2  ,\\ \nabla  \cdot (\sigma(x) + i \omega_2 \ep(x)  )  \nabla  u_2(x) = 0  &  \mbox{in } \RR^2 \setminus \overline{\Omega_1 \cup \Omega_2}  ,\\ u_2|_+ - u_2 |_- = \xi_1 \dfrac{\partial u_2}{\partial \nu} \biggr |_+  &  \mbox{on } \partial \Omega_1 ,\\ u_2|_+ - u_2 |_- = \xi_2 \dfrac{\partial u_2}{\partial \nu} \biggr |_+  &  \mbox{on } \p \Omega_2 ,\\ \dfrac{\partial u_2}{\partial \nu} \biggr |_- = 0   & \mbox{on } \partial \Omega_1  ,  \p \Omega_2  , \\ |u_2(x)| = O(|x|^{-1})  & \mbox{as } |x| \to \infty.\end{cases} \end{equation}
	
\end{prop}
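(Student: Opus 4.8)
The plan is to exploit the linearity of the system \eqref{eq:model_u_timedomain} together with the fact that the forcing enters only through the two pure tones $e^{i\omega_1 t}$ and $e^{i\omega_2 t}$. I would proceed by construction: first solve the two stationary transmission problems \eqref{eq:model_u1} and \eqref{eq:model_u2} for $u_1$ and $u_2$, then substitute the ansatz \eqref{eq:representation_u} into every line of \eqref{eq:model_u_timedomain} and check that it is satisfied. The mechanism that makes the substitution work is that, since $\omega_1 \neq \omega_2$, the functions $t \mapsto e^{i\omega_1 t}$ and $t \mapsto e^{i\omega_2 t}$ are linearly independent, so equating the coefficients of $e^{i\omega_1 t}$ and of $e^{i\omega_2 t}$ on the two sides of each equation is legitimate and is exactly what decouples the time-domain problem into the two frequency-domain systems.

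Concretely, in $\Omega_1$ the relation $\Delta u_1\, e^{i\omega_1 t} + \Delta u_2\, e^{i\omega_2 t} = f_1\, e^{i\omega_1 t}$ forces $\Delta u_1 = f_1$ and $\Delta u_2 = 0$, and symmetrically in $\Omega_2$ one obtains $\Delta u_1 = 0$ and $\Delta u_2 = f_2$. The only computation requiring care is the exterior equation: since $\p_t u = i\omega_1 u_1 e^{i\omega_1 t} + i\omega_2 u_2 e^{i\omega_2 t}$, one finds that $(\sigma(x) + \ep(x)\p_t)\nabla u = (\sigma + i\omega_1\ep)\nabla u_1\, e^{i\omega_1 t} + (\sigma + i\omega_2\ep)\nabla u_2\, e^{i\omega_2 t}$, the exponentials factoring cleanly through the spatial operator $\nabla\cdot$; separating the two harmonics then yields precisely $\nabla\cdot(\sigma + i\omega_j\ep)\nabla u_j = 0$ for $j = 1,2$. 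The transmission conditions on $\p\Omega_1$ and $\p\Omega_2$, the interior Neumann condition, and the decay at infinity all split identically, as each is linear in $u$ and its normal derivative and carries no explicit time dependence. The hypothesis $\omega_j \neq 0$ is used precisely at this stage: it guarantees that the frequency-domain operator genuinely retains the permittivity contribution $i\omega_j\ep$, so that each $u_j$ solves the intended complex transmission problem rather than the degenerate purely conductive one.

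The only genuinely nontrivial point is the reverse implication, namely that \emph{every} solution of \eqref{eq:model_u_timedomain} takes the form \eqref{eq:representation_u}, rather than merely that such a superposition is \emph{a} solution. I would close this by invoking well-posedness: each of \eqref{eq:model_u1} and \eqref{eq:model_u2} admits a unique solution $u_j$ by the layer-potential theory for transmission problems of this type developed in \cite{Am2} (the decay condition pinning down the additive constant), and the time-domain problem is itself uniquely solvable in the relevant time-harmonic class. The superposition built from $u_1, u_2$ is then \emph{the} solution and must coincide with \eqref{eq:representation_u}. The main obstacle is therefore not the substitution, which is routine, but securing the uniqueness statement for the time-domain problem; once that is available, the linear independence of the two exponentials does the remaining work.
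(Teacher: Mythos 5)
Your proposal is correct and follows essentially the same route as the paper: substitute the ansatz \eqref{eq:representation_u} into \eqref{eq:model_u_timedomain} and use the linear independence of $e^{i\omega_1 t}$ and $e^{i\omega_2 t}$ (the paper factors out $e^{i(\omega_2 - \omega_1)t}$ to the same effect) to decouple the equations in $\Omega_1$, in $\Omega_2$, and in the exterior, the boundary and decay conditions splitting trivially since they carry no explicit time dependence. Your closing discussion of uniqueness and the reverse implication goes slightly beyond the paper, whose proof consists of the substitution argument alone and leaves well-posedness implicit.
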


\begin{proof} We substitute \eqref{eq:representation_u} into \eqref{eq:model_u_timedomain}. Considering the equation in $\Omega_1 \times \RR^+$ we get
	\[e^{i \omega_1 t} \Delta u_1  + e^{i \omega_2 t} \Delta u_2 = e^{i \omega_1 t} f_1  .\]
	Thus
	\[ ( \Delta u_1 - f_1)  + e^{i (\omega_2 - \omega_1) t} \Delta u_2 =  0 ,\]
	which yields $\Delta u_1 - f_1 = 0$  in $\Omega_1$ and $\Delta u_2 = 0$ in $\Omega_1$.
	
	\noindent In the same manner, we get the equations satisfied by $u_1$ and $u_2$ in $\Omega_2$.
	
	\noindent Outside the fish bodies, we have
	\[ \nabla  \cdot (\sigma +  \ep \p_t )  \nabla  e^{i \omega_1 t} u_1 + \nabla  \cdot (\sigma +\ep \p_t  )  \nabla  e^{i \omega_2 t} u_2= 0  ,  \]
	\[ e^{i \omega_1 t} \nabla  \cdot (\sigma +  i \omega_1 \ep  )  \nabla  u_1 +  e^{i \omega_2 t} \nabla  \cdot (\sigma + i \omega_2 \ep  )  \nabla   u_2 = 0  ,  \]
	that yields $\nabla  \cdot (\sigma +  i \omega_1 \ep  )  \nabla  u_1 = 0$ in $\RR^2 \setminus (\overline{\Omega_1 \cup \Omega_2})$ and $\nabla  \cdot (\sigma +  i \omega_2 \ep  )  \nabla  u_2 = 0$ in $\RR^2 \setminus (\overline{\Omega_1 \cup \Omega_2})$.
	
	\noindent Finally it is easy to check that the boundary conditions remain unchanged because the time dependency does not appear explicitly.
	
\end{proof}

\begin{rem} 
	The potentials $u_1$ and $u_2$, that respectively solve \eqref{eq:model_u1} and \eqref{eq:model_u2}, have a meaningful interpretation that is based on two different sub-modalities of the electroreception. As a matter of fact, $u_1$ can be viewed as the potential when the fish $\FF{1}$ is active and $\FF{2}$ is passive, whereas $u_2$ can be viewed as the potential when the fish $\FF{1}$ is passive and $\FF{2}$ is active. See \cite{caputi2}.
	
	
\noindent	Formula \eqref{eq:representation_u} tells us that it is possible to study the total field looking separately at these two different oscillating regimes.
\end{rem}

\noindent The idea is to separate the two signals from the measurements of their superposition. This can be done easily by using signal analysis techniques, see \cite{lund}.

\noindent \figref{fig:jamming} illustrates the potential before the jamming avoidance response, when the fish emit signals at a certain common frequency,
whereas  \figref{fig:submodalities}  depicts the two submodalities contained in the total signal $u(x,t)$ after they have switched their EOD frequencies.

\subsection{Pulse-type fish}

For the pulse-type fish we have that $h_1(t)$ and $h_2(t)$ are pulse wave. We can assume that they both can be obtained from a standard pulse shape $h(t)$ (see Figure \figref{fig:standard_pulse}) by means of translation and scaling, i.e.,
\begin{equation*} h_1(t) = h(\eta_1 t - T_1) ,\end{equation*}
\begin{equation*} h_2(t) = h(\eta_2 t - T_2) .\end{equation*}

\begin{figure}[h]
	\centering
	\includegraphics[scale=0.55]{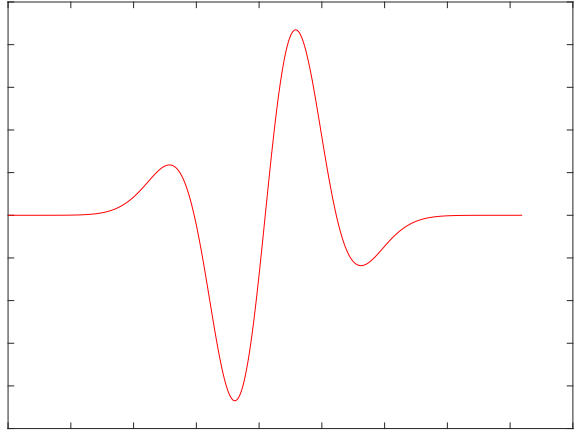}
	\caption{Standard shape of the pulse wave $h(t)$.}
	
	\label{fig:standard_pulse}
\end{figure}

For some pulse-type species, as Gymnotoid, the jamming avoidance response is obtained by shortening the duration of the emitted pulse, see \cite{helli3}. In this way, they minimize the chance of pulse coincidence by transient accelerations (decelerations) of their EOD rate. For $\eta_1 , \eta_2 > 0$ large enough such that $\mbox{supp}(h_1) \cap \mbox{supp}(h_2) = \emptyset$.

Thus, for $t_1, t_2 > 0$ such that $h_1(t_1) \ne 0$ and $h_2(t_2) \ne 0$ we can consider $u_1(x) := u(x,t_1)$ and $u_2(x) := u(x,t_2)$. These time-slices have the following property:

\begin{equation*}\begin{cases}\Delta u_1(x) = f_1(x) h_1(t_1), &x \in \Omega_1 \\ \Delta u_1(x) = 0  , &x \in \Omega_2,\end{cases} , \quad  \begin{cases}\Delta u_2(x) = 0, &x \in \Omega_1 \\ \Delta u_2(x) = f_2(x) h_2(t_2) , &x \in \Omega_2 \end{cases}. \end{equation*}
Hence we can achieve a separation of signals.

\begin{figure}[h]
	\begin{subfigure}[t]{0.5\textwidth}
		\centering
		\includegraphics[scale=0.40]{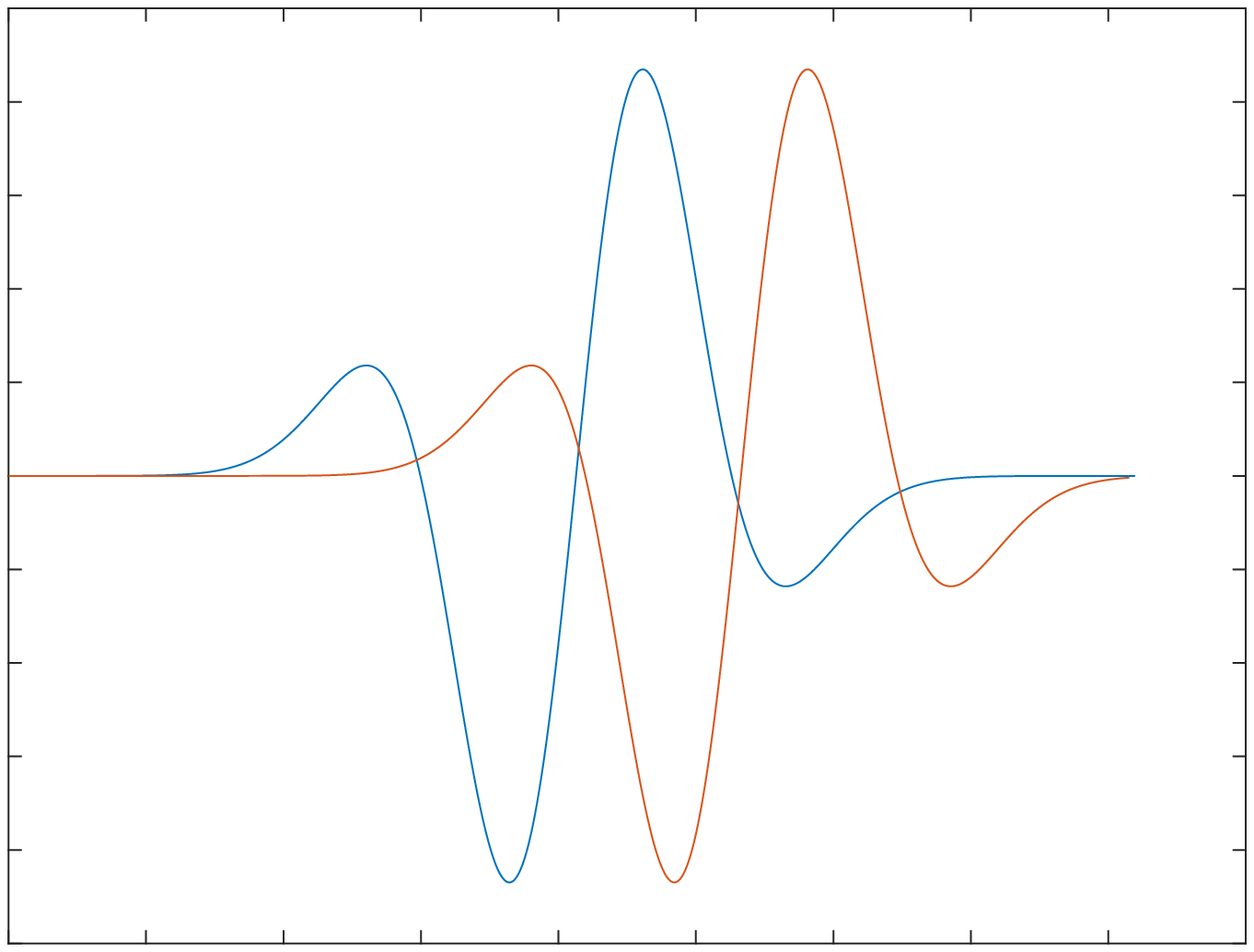}
		\caption{Before the JAR the two pulse signals may interfere with each other.}
	\end{subfigure}
	~ \hspace{0.03\textwidth}
	\begin{subfigure}[t]{0.5\textwidth}
		\centering
		\includegraphics[scale=0.40]{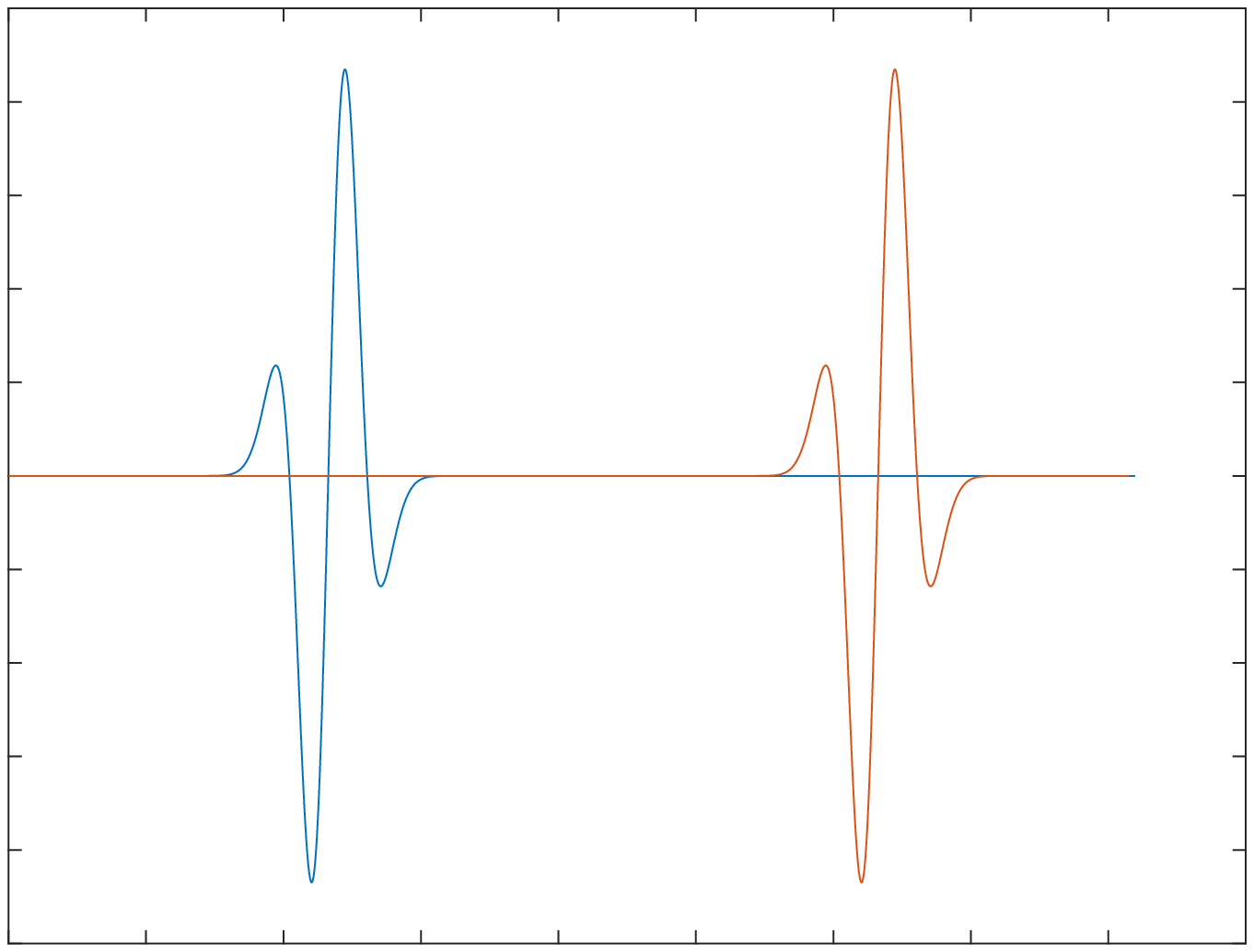}
		
		\caption{By shortening the duration of the pulse it is possible to identify two non-overlapping time-windows $I_1$ and $I_2$ corresponding to the signal emitted by the fish $\FF{1}$ and $\FF{2}$, respectively.}
	\end{subfigure}
	\caption{}
	\label{fig:submodalities}
\end{figure}

\noindent In the next sections, we will see an important consequence of Proposition \ref{prop:submodalities}. As a matter of fact $\mathfrak{F}_1$ can track $\mathfrak{F}_2$ by using the measurements of $u_2 |_{\p \Omega_1}$, solution to \eqref{eq:model_u2}, and can detect a small target $D$ by using the measurements of $u_1 |_{\p \Omega_1}$.

\begin{figure}[h]
        \centering
        \includegraphics[scale=0.55]{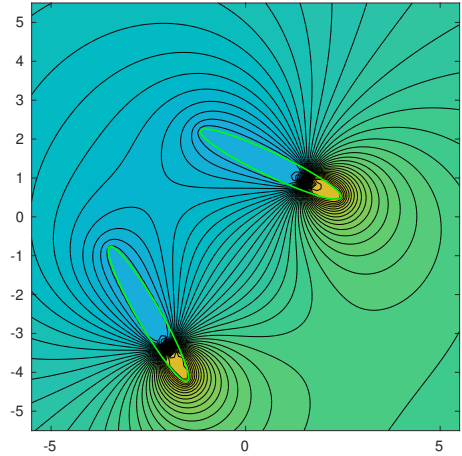}
        \caption{Before the JAR (the EOD frequencies of the two fish are the same). Plot of $u(x) = u_1(x) + u_2(x)$, where $u(x,t) = u(x) e^{i\omega_0 t}$.}

	\label{fig:jamming}
\end{figure}

\begin{figure}[h]
    \begin{subfigure}[t]{0.5\textwidth}
        \centering
        \includegraphics[scale=0.55]{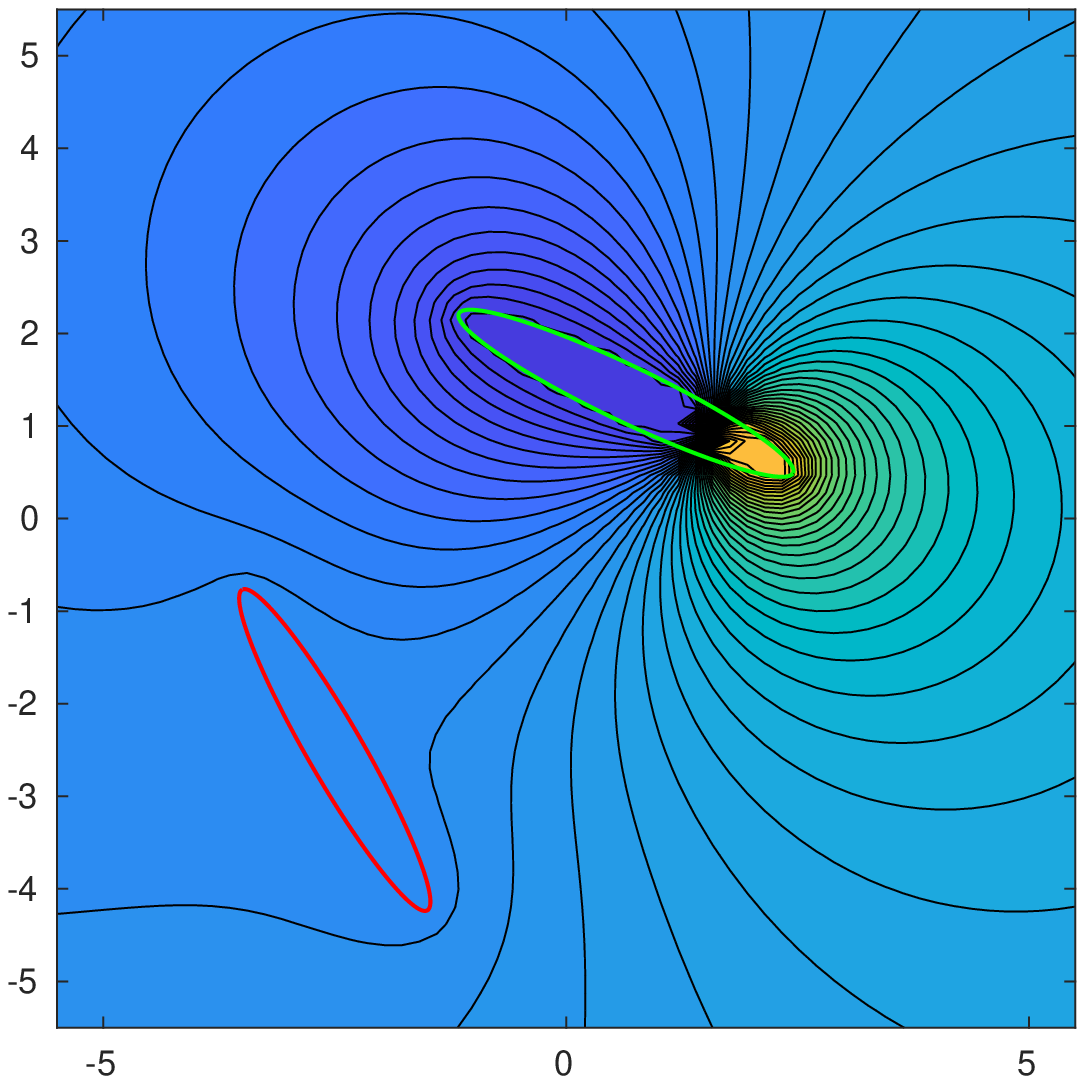}
        \caption{Plot of $u_2$. $\Omega_1$ (red) is passive and $\Omega_2$ (green) is active.}
    \end{subfigure}
    ~ \hspace{0.03\textwidth}
    \begin{subfigure}[t]{0.5\textwidth}
        \centering
        \includegraphics[scale=0.55]{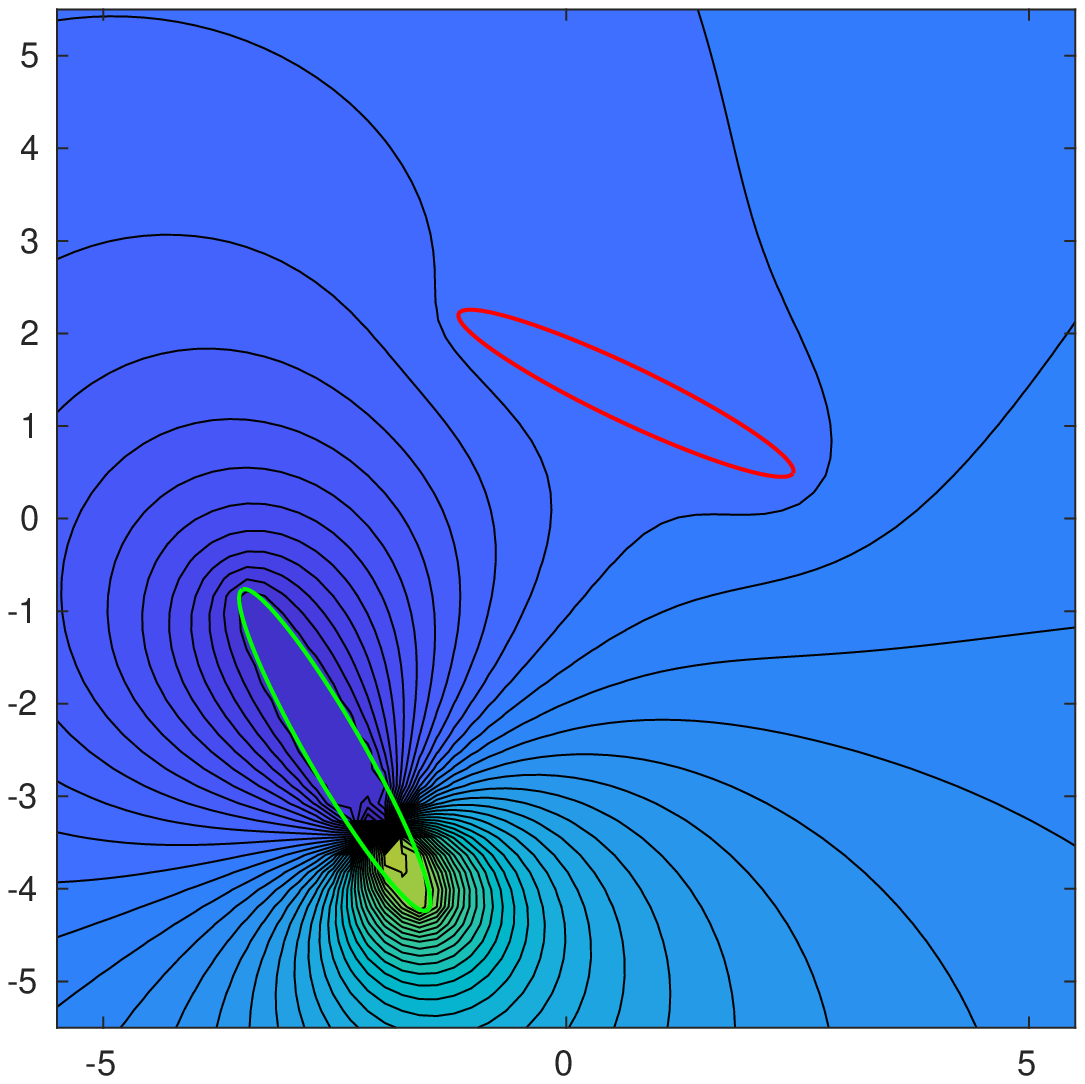}

        \caption{Plot of $u_1$. $\Omega_1$ (green) is active and $\Omega_2$ (red) is passive.}
        
        \label{fig:submodalitiesB}
    \end{subfigure}
      \caption{After the JAR (the EOD frequencies $\omega_1$ and $\omega_2$ of the two fish are apart from each other).}
	\label{fig:submodalities}
\end{figure}

\newpage
\section{Electro--communication}

The aim of this section is to give a mathematical procedure to model the communication abilities of the weakly electric fish, i.e., the capability of a fish to perceive a conspecific nearby. Assume, for instance, the point of view of the fish $\mathfrak{F}_1$. 
We want $\mathfrak{F}_1$ to estimate some basic features of $\mathfrak{F}_2$,
such as the position of its electric organ. More importantly, by using subsequent estimates, we want to design an algorithm for $\mathfrak{F}_1$ to track $\mathfrak{F}_2$.

\medskip

\noindent For the sake of clarity, we consider the case without the small dielectric target. It is worth emphasizing that the presence of the target is not troublesome since its effect on the tracking procedure is negligible even when the fish are swimming nearby.

\medskip

\noindent When $\mathfrak{F}_1$ gets close to $\mathfrak{F}_2$, both $\mathfrak{F}_1$ and $\mathfrak{F}_2$ experiment the so-called jamming avoidance response and thus their electric organ discharge (EOD) frequencies switch. When the EOD frequencies $\omega_1$ and $\omega_2$ are apart from each other, Proposition \ref{prop:submodalities} can be applied.

\noindent Let $u_2$ be the solution to the transmission problem \eqref{eq:model_u2}.
As previously mentioned, the function $u_2$ can be extracted from the total signal $u(x,t)$ using signal analysis techniques.

\medskip

\noindent We define
\begin{equation*} H^{u_2}(x) = \left ( \mathcal{S}_{\Omega_1} - \xi_1 \mathcal{D}_{\Omega_1} \right )\left [ \frac{\p u_{2}}{\p \nu} \biggr |_+ \right ] (x) .\end{equation*}
Let us recall the following boundary integral representation: for each  $x \in \RR^2 \setminus ( \overline{\Omega_1 \cup \Omega_2})$,
\begin{align*}
(u_2 - H^{u_2})(x) &=  \int_{\partial \Omega_2}  \left (   \frac{\partial u_2}{\partial \nu} \biggr |_{+}(y) \; \Gamma (x,y)- \frac{\partial \Gamma}{\partial \nu_y}(x,y) \; u_2 |_{+}(y) \, \right )  \text{d} s_y \,.
\end{align*}
Making use of the Robin boundary condition on $\p \Omega_2$ and integration by parts yields
\begin{align*}
(u_2 - H^{u_2})(x) 
&=  \int_{\partial \Omega_2}  \left (   \frac{\partial u_2}{\partial \nu} \biggr |_{+}(y) \; \Gamma (x,y)- \frac{\partial \Gamma}{\partial \nu_y}(x,y) \; u_2 |_{+}(y) \, \right )  \text{d} s_y
 \\
&=  \int_{\partial \Omega_2}  \left (   \frac{\partial u_2}{\partial \nu} \biggr |_{+}(y) \; \Gamma (x,y)- \frac{\partial \Gamma}{\partial \nu_y}(x,y) \; \left ( \xi_2 \frac{\partial u_2}{\partial \nu} \biggr |_{+}(y) +  u_2 |_{-}(y) \right ) \, \right )  \text{d} s_y  
\\ 
&=  \int_{\partial \Omega_2}     \frac{\partial u_2}{\partial \nu} \biggr |_{+}(y) \; \left ( \Gamma (x,y)- \xi_2 \frac{\partial \Gamma}{\partial \nu_y}(x,y) \right )  \; \text{d} s_y  -   \int_{\partial \Omega_2}  \frac{\partial \Gamma}{\partial \nu_y}(x,y) u_2 |_{-}(y) \, \text{d} s_y 
\\ 
&=  \int_{\partial \Omega_2}     \frac{\partial u_2}{\partial \nu} \biggr |_{+} \; \left ( \Gamma - \xi_2 \frac{\partial \Gamma}{\partial \nu} \right )  \; \text{d} s  \pm \alpha \left [ \Gamma (x-x_1^{(2)}) - \Gamma (x-x_2^{(2)}) \right ]  \, .
\end{align*}
Therefore, we obtain
\begin{equation} \label{eq:surf_and_dipole} 
(u_2 - H^{u_2})(x) =  \int_{\partial \Omega_2}     \frac{\partial u_2}{\partial \nu} \biggr |_{+} \; \left ( \Gamma - \xi_2 \frac{\partial \Gamma}{\partial \nu} \right )  \; \text{d} s  \pm \alpha \left [ \Gamma (x-x_1^{(2)}) - \Gamma (x-x_2^{(2)}) \right ]  \, .
\end{equation}
Observe that, for $x$ away from the $x_1^{(i)}$, we can approximate $\Gamma (x-x_2^{(2)}) - \Gamma (x-x_1^{(2)}) $ as follows:
\[\left [ \Gamma (x-x_2^{(2)}) - \Gamma (x-x_1^{(2)})  \right ] \approx \pm \nabla \Gamma (x - x_1^{(2)} ) \cdot (x_2^{(2)} - x_1^{(2)}) = \frac{(x - x_1^{(2)}) \cdot (x_2^{(2)} - x_1^{(2)})}{\|x - x_1^{(2)}\|^2} .\]

\noindent Consider an array of receptors $(x_l)_{l=1}^{M}$ on $\p \Omega_1$. We aim at solving the inverse source problem of determining the dipole, of $\mathfrak{F}_2$ from the knowledge of the measurements on the skin of $\mathfrak{F}_1$:
\begin{equation} \left \{ (u_2 - H^{u_2})(x_l) \;: \mbox{ for } l = 1, ... , N \right \}.\end{equation}
In order to estimate the dipole, we assume that the following single-dipole approximation holds:
\begin{equation} \label{eq:dipole_est}  (u_2 - H^{u_2})(x_l) \approx  \frac{(x_l - \wh{z}) \cdot \wh{\mathbf{p}}}{\|x_l - \wh{z} \|^2}   ,\end{equation}
where $\wh{\mathbf{p}}$ and $\widehat{z}$ denote respectively the moment and the center of the equivalent dipolar source.

\begin{rem}  The single-dipole approximation \eqref{eq:dipole_est} is  an equivalent representation of a spread source. However, in the presence of several well-separated sources, such approximation is not trustworthy anymore \cite{he}. In the case of $P \ge 3$ conspecifics we would extract $u_1 , ... , u_P$ components from the total signal, and the single-dipole approximation remains applicable to each one of the components  $u_2 , ... , u_P$.
\end{rem}





\section{Electro--sensing}

\noindent Now, suppose to have $\FF{1} , \FF{2}$ as before and a target close to $\FF{1}$.

\noindent Let $u_1$ be the solution to the transmission problem \eqref{eq:model_u1}, that is,
\begin{equation*} \label{eq:model_u1_test} \begin{cases} \Delta u_1(x) = f_1(x)   & \mbox{in } \Omega_1 ,\\ \Delta u_1 (x) = 0 & \mbox{in } \Omega_2 ,\\ \nabla  \cdot (\sigma(x) + i \omega_1 \ep(x)  )  \nabla  u_1(x) = 0  &  \mbox{in } \RR^2 \setminus \overline{\Omega_1 \cup \Omega_2}   ,\\ u_1|_+ - u_1 |_- = \xi_1 \dfrac{\partial u_1}{\partial \nu} \biggr |_+  &  \mbox{on } \partial \Omega_1 ,\\ u_1|_+ - u_1 |_- = \xi_2 \dfrac{\partial u_1}{\partial \nu} \biggr |_+  &  \mbox{on }  \p \Omega_2 ,\\ \dfrac{\partial u_1}{\partial \nu} \biggr |_- = 0   & \mbox{on } \partial \Omega_1  ,  \p \Omega_2  , \\ |u_1(x)| = O(|x|^{-1})  & \mbox{as } |x| \to \infty,\end{cases} \end{equation*}
and let $U_1$ be the background solution, that solves the problem
\begin{equation*} \label{eq:model_u1_bg_test} \begin{cases} \Delta U_1(x) = f_1(x)   & \mbox{in } \Omega_1 ,\\ \Delta U_1 (x) = 0 & \mbox{in } \Omega_2 ,\\ \Delta  U_1(x) = 0  &  \mbox{in } \RR^2 \setminus \overline{\Omega_1 \cup \Omega_2}   ,\\ U_1|_+ - U_1 |_- = \xi_1 \dfrac{\partial U_1}{\partial \nu} \biggr |_+  &  \mbox{on } \partial \Omega_1 ,\\ U_1|_+ - U_1 |_- = \xi_2 \dfrac{\partial U_1}{\partial \nu} \biggr |_+  &  \mbox{on }  \p \Omega_2 ,\\ \dfrac{\partial U_1}{\partial \nu} \biggr |_- = 0   & \mbox{on } \partial \Omega_1  ,  \p \Omega_2  , \\ |U_1(x)| = O(|x|^{-1})  & \mbox{as } |x| \to \infty.\end{cases} \end{equation*}
Consider $\Rgf{1,2}$ the Green's function associated with Robin boundary conditions, that is defined for $x \in \RR^2 \setminus ( \overline{\Omega_1 \cup \Omega_2})$ by
\begin{equation} \label{eq:model_GR} \begin{cases} - \Delta_y \Rgf{1,2}(x,y) = \delta_x(y) , & y \in \RR^2 \setminus \overline{\Omega_1 \cup \Omega_2}, \\  \Rgf{1,2}(x,y) |_+ -  \xi_1 \dfrac{\partial  \Rgf{1,2}}{\partial \nu_x} (x,y) \biggr |_+ = 0  , & y \in \partial \Omega_1 , \\ \Rgf{1,2}(x,y) |_+ -  \xi_2 \dfrac{\partial  \Rgf{1,2}}{\partial \nu_x} (x,y) \biggr |_+ = 0  , & y \in \partial \Omega_2 , \\  \left | \Rgf{1,2}(x,y) + \frac{1}{2 \pi} \log |y| \right | = O(|y|^{-1}) , & |y| \to \infty.\end{cases} \end{equation}
Recall the following boundary integral equation: for each  $x \in \RR^2 \setminus ( \overline{\Omega_1 \cup \Omega_2 \cup D})$,
\begin{equation*}
(u_1 - U_1)(x) =  \int_{\partial D}  \left (   \frac{\partial u}{\partial \nu} \biggr |_{+}(y) \; \Rgf{1,2} (x,y)- \frac{\partial \Rgf{1,2}}{\partial \nu_y}(x,y) \; u |_{+}(y) \, \right )  \text{d} s_y ,
\end{equation*}
where $U_1$ is the background solution, i.e., the solution without the inhomogeneity $D$, when the only dipolar source lies inside the body of $\mathfrak{F}_1$, see \figref{fig:submodalitiesB}.
\begin{equation*}
(u_1 - U_1)(x) =  \frac{(k-1)}{k} \int_{\partial D}  \left (   \frac{\partial u}{\partial \nu} \biggr |_{+}(y) \; \Rgf{1,2} (x,y) \, \right )  \text{d} s_y .
\end{equation*}

Let $B$ be a bounded open set with characteristic size $1$. Assume that $D = z + \delta B$, i.e., $D$ is a target located at $z$ which has characteristic size $\delta$. With the same argument as in \cite{Am2}, we obtain the following small volume approximation.
\begin{thm}[Dipolar approximation] Suppose $\text{dist}(\partial \Omega_1 , z) \gg 1$ and $\delta \ll 1$. Then for any $x \in \partial \Omega_1$, 
\begin{equation}\label{thm:dip_exp} \left (  \frac{\partial u_1}{\partial \nu} - \frac{\partial U_1}{\partial \nu} \right ) (x) = - \delta^2 \nabla U_1(z)^T M (\lambda, B) \nabla_y \left ( \frac{\partial \Gamma_R^{(1,2)}}{\partial \nu}  \biggr |_+ \right ) (x, z)  + O(\delta^3), \end{equation}
where $T$ denotes the transpose, $M(\lambda , B) = (m_{ij})_{i,j \in\{1,2\}}$ is the first-order polarization tensor associated with $B$ and contrast  parameter $\lambda$, given by
\begin{equation}\label{eq:PT-fo} m_{ij}  = \int_{\partial B}  y_{i}   \left ( \lambda I - \mathcal{K}_B^* \right )^{-1} \left ( \frac{\partial x_j}{\partial \nu} \biggr |_{\partial B} \right ) (y) \text{ d} s_y . \end{equation}
\end{thm}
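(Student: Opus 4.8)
The plan is to start from the single-layer representation of $u_1-U_1$ recorded just above the statement and to read off its leading behaviour in $\delta$ by rescaling about the centre $z$ of the target. Writing the representation with density $\psi_\delta:=\frac{k-1}{k}\frac{\partial u}{\partial\nu}\big|_+$ on $\partial D$,
\[
(u_1-U_1)(x)=\int_{\partial D}\psi_\delta(y)\,\Rgf{1,2}(x,y)\,\text{d}s_y ,
\]
I would first take the exterior normal derivative at $x\in\partial\Omega_1$. Because $\text{dist}(\partial\Omega_1,z)\gg 1$ keeps $\partial D$ uniformly away from $\partial\Omega_1$, one may differentiate under the integral and the kernel $y\mapsto\frac{\partial\Rgf{1,2}}{\partial\nu}\big|_+(x,y)$ is smooth near $z$; this yields
\[
\Big(\tfrac{\partial u_1}{\partial\nu}-\tfrac{\partial U_1}{\partial\nu}\Big)(x)=\int_{\partial D}\psi_\delta(y)\,\frac{\partial\Rgf{1,2}}{\partial\nu}\Big|_+(x,y)\,\text{d}s_y ,
\]
so the whole problem reduces to the asymptotics of the density $\psi_\delta$ and of this smooth kernel as $\delta\to0$.

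Next I would derive the integral equation governing $\psi_\delta$. Splitting $\Rgf{1,2}(x,y)=-\Gamma(x-y)+R(x,y)$ with $R$ smooth near $D$ (the sign reflecting the normalisation $-\Delta_y\Rgf{1,2}=\delta_x$), the transmission condition across $\partial D$ with contrast $k$ shows that $\psi_\delta$ solves an equation of the form $(\lambda I-\mathcal{K}_D^*)[\psi_\delta]=\frac{\partial U_1}{\partial\nu}\big|_{\partial D}$ up to a term coming from $R$, with $\lambda=\frac{k+1}{2(k-1)}$. Rescaling by $y=z+\delta\xi$, $\xi\in\partial B$, I would use that $\mathcal{K}_D^*$ is scale invariant, turning into $\mathcal{K}_B^*$, while the $R$-contribution is $O(\delta)$ relative to the leading part precisely because $z$ is far from both fish. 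Taylor expanding the Neumann datum, $\frac{\partial U_1}{\partial\nu}(z+\delta\xi)=\nabla U_1(z)\cdot\nu(\xi)+O(\delta)$, the rescaled density satisfies
\[
\psi_\delta(z+\delta\xi)=(\lambda I-\mathcal{K}_B^*)^{-1}\big[\nabla U_1(z)\cdot\nu\big](\xi)+O(\delta)
\]
in $L^2(\partial B)$, the inverse being bounded uniformly in $\delta$.

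I would then substitute this into the boundary expression, using $\text{d}s_y=\delta\,\text{d}s_\xi$ together with the Taylor expansion $\frac{\partial\Rgf{1,2}}{\partial\nu}\big|_+(x,z+\delta\xi)=\frac{\partial\Rgf{1,2}}{\partial\nu}\big|_+(x,z)+\delta\,\xi\cdot\nabla_y\frac{\partial\Rgf{1,2}}{\partial\nu}\big|_+(x,z)+O(\delta^2)$. The decisive point is that the monopole term vanishes: with the paper's convention $\mathcal{K}_B[1]=\tfrac12$, the $L^2$-adjointness of $\mathcal{K}_B$ and $\mathcal{K}_B^*$ combined with $\int_{\partial B}\nu_j\,\text{d}s=0$ forces $\int_{\partial B}(\lambda I-\mathcal{K}_B^*)^{-1}[\nu_j]\,\text{d}s=0$ whenever $\lambda\neq\tfrac12$, so $\int_{\partial B}\psi_\delta(z+\delta\,\cdot)\,\text{d}s=O(\delta)$. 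Hence the first surviving contribution is of order $\delta^2$ and is carried by the dipole moment $\int_{\partial B}\xi_i\,(\lambda I-\mathcal{K}_B^*)^{-1}[\nu_j]\,\text{d}s=m_{ij}$ from \eqref{eq:PT-fo}, giving $\int_{\partial B}\xi\,\psi_\delta(z+\delta\,\cdot)\,\text{d}s=M(\lambda,B)\nabla U_1(z)+O(\delta)$. Collecting terms and using the symmetry of $M(\lambda,B)$ to transpose then reproduces \eqref{thm:dip_exp}, the overall minus sign being inherited from the $-\Gamma$ normalisation of $\Rgf{1,2}$, exactly as in the identical computation of \cite{Am2}.

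The step I expect to be the main obstacle is the uniform control of the two remainders, namely showing that the smooth part $R$ of $\Rgf{1,2}$ and the second-order Taylor terms contribute only at $O(\delta^3)$. This rests on uniform $L^2(\partial B)$ bounds for $(\lambda I-\mathcal{K}_B^*)^{-1}$ and on the separation hypothesis $\text{dist}(\partial\Omega_1,z)\gg1$, which keeps all kernels and their $y$-derivatives bounded on $\partial D$. The cancellation of the monopole term is the other delicate ingredient, since it is exactly what upgrades the naive $O(\delta)$ bound to the stated $O(\delta^2)$ leading order.
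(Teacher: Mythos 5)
Your route is exactly the one the paper intends: the paper offers no proof of this theorem beyond the sentence ``with the same argument as in \cite{Am2}'', and the argument in \cite{Am2} is precisely your scheme --- represent $u_1-U_1$ through the Robin Green's function $\Gamma_R^{(1,2)}$, derive the integral equation $(\lambda I-\mathcal{K}_D^*)[\psi_\delta]=\frac{\p U_1}{\p\nu}\big|_{\p D}+(\text{smooth } R\text{-correction})$ with $\lambda=\frac{k+1}{2(k-1)}$, rescale to $\p B$ using the scale invariance of $\mathcal{K}^*$ in two dimensions, and read off the polarization tensor \eqref{eq:PT-fo} from the first moment of the rescaled density. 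The differentiation under the integral sign (legitimate since $\operatorname{dist}(\p\Omega_1,z)\gg 1$ keeps the kernel smooth), the uniform invertibility of $\lambda I-\mathcal{K}_B^*$, the sign bookkeeping coming from the normalization $-\Delta_y\Gamma_R^{(1,2)}=\delta_x$, and the adjoint identity you invoke for the monopole are all sound.

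There is, however, one genuine gap in your order counting, and it sits at the step you yourself flag as delicate. You establish only $\int_{\p B}\psi_\delta(z+\delta\,\cdot)\,\text{d}s=O(\delta)$. After multiplying by the surface-measure factor $\delta$ and the zeroth-order Taylor term $\frac{\p \Gamma_R^{(1,2)}}{\p\nu}\big|_+(x,z)$, this leaves a monopole contribution of size $O(\delta^2)$ --- the \emph{same} order as the dipole term you want to isolate. As written, your expansion therefore identifies the right-hand side of \eqref{thm:dip_exp} only up to an unspecified $O(\delta^2)$ error, not $O(\delta^3)$, and even the leading term is not pinned down. The fix is short: the monopole of the exact density vanishes identically, not merely to leading order. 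Indeed $u_1$ satisfies a constant-coefficient equation inside $D$, hence is harmonic there, so $\int_{\p D}\frac{\p u_1}{\p\nu}\big|_-\,\text{d}s=0$, and flux continuity across $\p D$, namely $(\sigma+i\omega_1\ep)\frac{\p u_1}{\p\nu}\big|_+=(\sigma_0+i\omega_1\ep_0)\frac{\p u_1}{\p\nu}\big|_-$, gives $\int_{\p D}\frac{\p u_1}{\p\nu}\big|_+\,\text{d}s=0$ exactly, hence $\int_{\p D}\psi_\delta\,\text{d}s=0$. Equivalently, within your expansion every order of the density has the form $(\lambda I-\mathcal{K}_B^*)^{-1}\left[\frac{\p H}{\p\nu}\right]$ with $H$ harmonic near $B$ (Taylor polynomials of $U_1$ and the harmonic $R$-correction fields), and your own adjoint identity yields $\int_{\p B}(\lambda I-\mathcal{K}_B^*)^{-1}\left[\frac{\p H}{\p\nu}\right]\text{d}s=\left(\lambda-\frac{1}{2}\right)^{-1}\int_{\p B}\frac{\p H}{\p\nu}\,\text{d}s=0$ term by term. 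With that strengthening the monopole drops out at all orders, your error budget closes at $O(\delta^3)$, and the proof coincides with the argument of \cite{Am2} to which the paper defers.
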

Note that, since the background potential is real, for $x \in \partial \Omega_1$ we have
\begin{equation}\label{eq:dipole_approx_Im} 
\Im \left ( \frac{\partial u_1}{\partial \nu}\right ) (x) \approx - \delta^2 \nabla U_1(z)^T \Im M (\lambda, B) \nabla_y \left ( \frac{\partial \Gamma_R^{(1,2)}}{\partial \nu}  \biggr |_+ \right ) (x, z) .
\end{equation}

This last step is crucial to locate the target because $U_1$ is only approximately known from the measurements and even a very small displacement in the location of $\FF{2}$ can cause an error on the background potential $U_1$, which is of the same order as the contribution of the target.

\noindent On the other hand, when $z$ is not too close to $\p \Omega_2$, the contribution of $\FF{2}$ contained into $\nabla U_1(z)$ is negligible.
 Therefore, we approximate $\nabla U_1(z) \approx \nabla \wh{U}_1(z)$, where $\wh{U}_1$ is solution to the problem

\begin{equation} \label{eq:model_hat_u1_bg_test}
 \begin{cases} \Delta \wh{U}_1(x) = f_1(x)   & \mbox{in } \Omega_1 ,\\ \Delta  \wh{U}_1(x) = 0  &  \mbox{in } \RR^2 \setminus \overline{\Omega_1}  ,\\ \wh{U}_1|_+ - \wh{U}_1 |_- = \xi_1 \dfrac{\partial \wh{U}_1}{\partial \nu} \biggr |_+  &  \mbox{on } \partial \Omega_1 ,\\ \dfrac{\partial \wh{U}_1}{\partial \nu} \biggr |_- = 0   & \mbox{on } \partial \Omega_1  , \\ |\wh{U}_1(x)| = O(|x|^{-1})  & \mbox{as } |x| \to \infty.\end{cases} \end{equation}

After post-processing \eqref{eq:dipole_approx_Im} using the following operator
\begin{equation*}
{\pp{1}}  =  \frac{1}{2} I - \mathcal{K}_{\Omega_1}^* - \xi \frac{\p \mathcal{D}_{\Omega_1}}{\p \nu} ,
\end{equation*}
see \cite{Am2}, we get
\begin{equation}\label{eq:dipole_approx_Im_pp} 
\pp{1} \left [ \Im \left ( \frac{\partial u_1}{\partial \nu}\right )  \right ] (x) \approx  \delta^2 \nabla \wh{U}_1(z)^T \Im M (\lambda, B) \nabla_y \left ( \frac{\partial \Gamma}{\partial \nu_x}  \biggr |_+ \right ) (x, z) , \quad x \in \p \Omega_1 .
\end{equation}
Therefore, as long as $\text{dist}(z, \p \Omega_2) \gg 0$, the leading order term of the post-processed measured data is not significantly affected by the presence of $\FF{2}$.

A MUSIC-type algorithm for searching the position $z$ and a least square method for recovering the imaginary part of the polarization tensor $M(\lambda, B)$ can be applied, see \cite{Am5}. 


%
%
%
%
%

\section{Numerical experiments}

With applications in robotics in mind, and for the sake of simplicity, we can assume that the two fish populating our testing environment share the same effective thickness $\xi$ and the same shape, which is an ellipse with semiaxes $a = 2$ and $b = 0.3$. Therefore no tail-bending has been taken into account.

\noindent For the numerical computations of the direct solutions to the transmission problems involved in the following simulations, we solved the boundary integral system of equations by relying on boundary element techniques. We adapted the codes in \cite{code} to our  framework, with many fish populating the same environment.

\subsection{Electro-communication}

We perform numerical simulations to show how $\FF{1}$ can locate the position and the orientation of $\FF{2}$ by using a modified version of MUSIC-type algorithm for searching the dipolar source. Firstly, let us observe that accuracy is not improved by using a multi-frequency approach when noisy measurements are considered. Instead, $\FF{1}$ can use a MUSIC-type algorithm based on movement in order to improve the accuracy in the detection algorithm. 
We use the approximation \eqref{eq:dipole_est}.

\noindent We consider $N_s$ positions. For each $s \in \{1, ... , N_s\}$ let us denote by $\mathfrak{F}_2^s$ the fish at the position $s$. On its skin there are $N_r$ receptors $\{x_n^s\}_{n = 1}^{N_r}$. 
For each $s = 1, ... , N_s$, we define the $M \times 2$ lead field matrix $A_s$ as
\begin{equation} \label{eq:transfer_mat}\mathbf{A}_s(z) =  \begin{bmatrix}\ds  \frac{x_{1,1}^s - z_1}{\|x_1^s - z\|^2} &\ds \frac{x_{1,2}^s - z_1}{\|x_1^s - z\|^2} \\ \vdots & \vdots \\\ds \frac{x_{M,1}^s - z_1}{\|x_M^s - z \|^2} &\ds \frac{x_{M,2}^s - z_2}{\|x_M^s - z\|^2} \end{bmatrix} .\end{equation}
Let $\mathbf{F}$ be the Multi-Static Response (MSR) matrix defined as follows
\begin{equation*}
\mathbf{F} = \begin{bmatrix}
(u_2 - H^{u_2})(x_1^1) & \dots & 
(u_2 - H^{u_2})(x_1^{N_s}) \\

\vdots & \ddots & \vdots \\

(u_2 - H^{u_2})(x_{N_r}^1) & \dots & 
(u_2 - H^{u_2})(x_{N_r}^{N_s}) \\
\end{bmatrix} .
\end{equation*}
Moreover, we assume that the acquired measurements are corrupted by noise, i.e.,
\begin{equation*}
\mathbf{F}_{noisy} = \mathbf{F} + \mathbf{X} ,
\end{equation*}
where $\mathbf{X} \sim \mathcal{N}(0, \sigma_{\text{noise}}^2)$ is a Gaussian random variable with mean $0$ and variance $\sigma_{\text{noise}}^2$. In our simulations we set the variance to:
\[ \sigma_{\text{noise}} = (\mathbf{F}_{max} - \mathbf{F}_{min}) \sigma_0 ,  \]
where $\sigma_0$ is a positive constant called noise level, and $\mathbf{F}_{max}$ and $\mathbf{F}_{min}$ are the maximal and the minimal coefficient in the MSR matrix $\mathbf{F}$. 

\medskip

\noindent Let $\mathbf{F}^{\mathfrak{R}}_{noisy}$ be the real part of $\mathbf{F}_{noisy}$. Let $\lambda_1 \ge \lambda_2 \ge ... \ge \lambda_{N_r}$ be the eigenvalues of $\mathbf{F}^{\mathfrak{R}}_{noisy}  \cdot ( \mathbf{F}^{\mathfrak{R}}_{noisy} )^T$ and let $\Phi_1 , ... , \Phi_{N_r}$ be the correspondent eigenvectors.  The first eigenvalue is the one associated to the signal source and the span of the eigenvector $\Phi_1$ is called the signal subspace. The other eigenvectors span the noise subspace.

\noindent As it is well known, the MUSIC algorithm estimates the location of the dipole by checking the orthogonality between  $\mathbf{A}_s(z)$ \eqref{eq:transfer_mat} and the noise subspace projector $\mathcal{P}_N$ \cite{mosher}. This can be done for each position $s$.

\noindent For this purpose, we shall use a modified version of the MUSIC localizer in \cite{sekihara}, by simply taking the maximum over the positions:
\begin{equation} \label{eq:music_localizer2}
\mathcal{I}_2(z) = \max_{s = 1, ... , N_s} \left (  \frac{1}{\lambda_{\min} (\mathbf{A}_s(z)^T \mathcal{P}_N \mathbf{A}_s(z), \mathbf{A}_s(z) \mathbf{A}_s(z)^T)} \right ) ,
\end{equation}
where $\lambda_{\min}( \cdot , \cdot )$ indicates the generalized minimum eigenvalue of a matrix pair.

\noindent We expect that the MUSIC localizer has a large peak at the location of the equivalent dipole we are searching for. 
Once an estimate $\wh{z}$ of the true location has been obtained, the dipole moment can be estimated by means of the following formula:
\begin{equation} \wh{\mathbf{p}}_{est} = ( \mathbf{A}(\wh{z})^T \mathbf{A}(\wh{z}) )^{-1} \mathbf{A}(\wh{z})^T \Phi_1 .\end{equation} 
i.e., the least-square solution to the linear system
\begin{equation}\label{eq:linear_sys} \Phi_1 =\mathbf{A}(\wh{z}) \wh{\mathbf{p}}.\end{equation} 
\begin{figure}[h]
	\centering
	\includegraphics[scale=0.55]{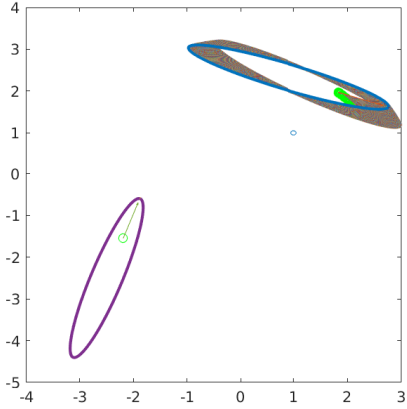}
	\caption{The setting. $\FF{1}$ is acquiring measurements at $N_s = 150$ different closely spaced positions.}
	
	\label{fig:movement}
\end{figure}
\begin{figure}[h]
	\centering
	\includegraphics[scale=0.55]{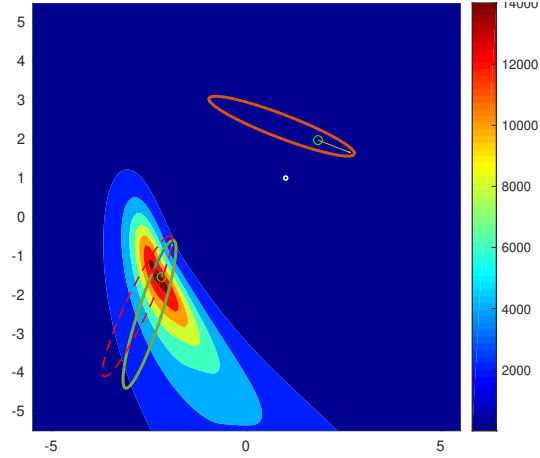}
	\caption{Estimate of the position and the orientation of $\FF{2}$, with noise level $\sigma_0 = 0.1$. The dashed red curve represents the estimated body of $\FF{2}$, whereas the green one represents the true body of $\FF{2}$. The white circle represents a small dielectric object placed between $\FF{1}$ and $\FF{2}$.}
	
	\label{fig:locate_fish}
\end{figure}

\begin{algorithm}[H]
	\Input{The feedback, that is the total electric potential signal $u(x,t)$ recorded by the receptors on $\p \Omega_1$.}
	
	\medskip
	
	\nl Decompose the feedback $u$ into $u_1$ and $u_2$ using signal separation techniques\;
	\nl $\texttt{MUSIC\_dipoleSearch}(u_2 |_{\p \Omega_1}, \Omega_1)$ :\\ \Indp
	\nl Build the (real part of the) MSR matrix $\mathbf{F}_{noisy} \in M(N_r \times N_s, \RR)$ from measurements collected during a short period\;
	\nl  Compute the eigen-decomposition of $\mathbf{F}_{noisy} \mathbf{F}_{noisy}^T = \Phi \Lambda \Phi^T$  and the noise subspace projector $\mathcal{P}_N$\;
	
	\nl Evaluate the MUSIC localizer $\mathcal{I}_2$ on the nodes of a fine uniform grid $\mathcal{G}$ in the vicinity of $\Omega_1$ \;
	
	\nl $\wh{z}  \leftarrow \; \arg \max_{\mathcal{G}} \; \mathcal{I}_2(z)$ \;

	\nl Determine $\wh{\mathbf{p}}_{est}$ as the least-square solution to the linear system \eqref{eq:linear_sys}\;
	\medskip
	\medskip
	\Output{an approximated position of the position of the electric organ of the  conspecific $\FF{2}$.}
	\caption{MS MUSIC: Detect the presence of a conspecific from skin measurements}
	\label{alg:detection_algorithm1}
\end{algorithm}

\subsection{Tracking}

Now we want to show that the dipole approximation that we assumed in the previous subsection is good enough to be used successfully for tracking purposes.

\noindent We assume the following setting for the numerical simulations. The fish $\FF{1}$ is swimming along a fixed trajectory. Let us assume that the motion of its electric organ is described by a continuous path $F : [t_1,t_N] \arr \RR^2$. Let $t_1 < t_2 < ... < t_N$ be a temporal grid on $[t_1,t_N]$ and let $t_j = s_1^j < ... < s_{M}^j = t_{j+1}$ be a grid on $[t_j,t_{j+1}]$ for $j=1 , ... , N-1$. 

\noindent At the beginning, when $t = t_1$, $\FF{2}$ starts following $\FF{1}$. The tracking is performed by estimating the positions of $\FF{1}$ at the nodes of the grid $t_1, ... , t_N$. Let us denote  by $X_n , Y_n$ and $p_n , q_n$ the positions and the orientations of $\FF{2}$ and $\FF{1}$ at $t = t_n$, respectively. In order to obtain an estimate $\wh{Y}_n$ of the position $Y_n$ we can apply Algorithm \ref{alg:detection_algorithm1}, that employs measurements at $s_1^{n-1} , ... , s_M^{n-1}$ to reduce the effect of the noise. More precisely, the discrete dynamic system that describes the evolution of the positions and orientations of the two fish is as follows:
\begin{equation}\begin{cases}
X_n = X_{n-1} + h_n p_{n-1}, \\
p_n = \mathbf{R}(\theta_n) p_{n-1},\\
Y_n = F(t_n) ,\\
q_n = F'(t_n) \approx \frac{F(t_n) - F(t_{n-1})}{h},
\end{cases}
\end{equation}
where $X_0$ and $p_0$ are the initial data. Let us define $T_{n-1} := \wh{Y}_{n-1} - X_{n-1}$, the pointing direction. The update of the orientation of $\FF{2}$ is given by an orthogonal matrix associated with a rotation by an angle $\theta_n$, $\mathbf{R}(\theta_n) \in O(2,\RR)$, and the turning angle is defined as
\begin{equation}
\theta_n :=  \theta_{n-1}  \pm \min \left ( \theta_{\max} , \wh{T_{n-1} p_{n-1}} \right ) .
\end{equation}
The numbers $h_1 , ... , h_M$ incorporate the velocity of the tracking fish and should be chosen adaptively, in order to allow a variety of maneuvering capabilities such as acceleration and deceleration, as well as swimming backwards when $h_n < 0$. In order to prevent both collision and separation, we shall assume the velocity to be a function of the distance between $X_n$ and $\wh{Y}_n$. 
$\theta_{\max}$ is the maximum turning angle. It is worth mentioning that the choice of $\theta_{\max}$  has a strong impact on the efficiency of the tracking procedure.

\begin{algorithm}[H]
	\Input{Temporal grid over $[t_1,t_N]$. The maximum turning angle $\theta_{\max}$.}
	
	\medskip
	
	\nl $\texttt{RT\_Tracking}(\theta_{\max},[t_1,t_N],N,M)$ :\\ \Indp
	\nl \For{$n\gets 1 , \dots , N$}{
		
		\nl	$X_n \gets X_{n-1} + h_n p_{n-1}$\;
		
		\nl $p_n \gets \mathbf{R}(\theta_n) p_{n-1}$\;
		
		\nl $\wh{Y}_n \gets \texttt{MUSIC\_dipoleSearch}(n)$  \\} \Indp
	\medskip
	\medskip
	\Output{Trajectory of the following fish.}
	\caption{Real-Time Tracking: Fish-follows-Fish algorithm.}
	\label{alg:detection_algorithm1}
\end{algorithm}

\begin{figure}[h]
	\centering
	
	\begin{subfigure}[t]{0.5\textwidth}
		\centering			
		\includegraphics[scale=0.5]{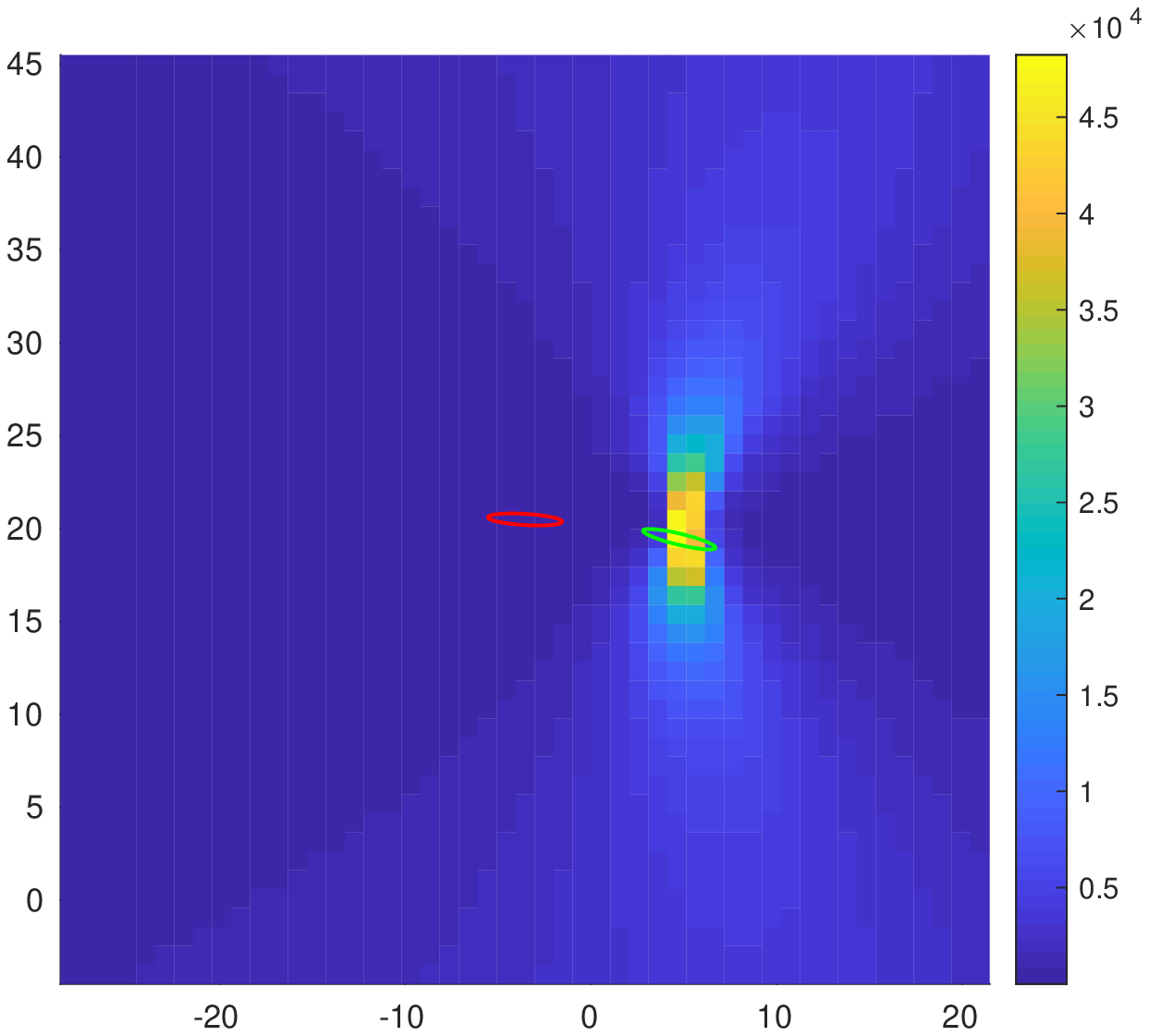}
		\caption{$\sigma_0 = 0.01$}
	\end{subfigure}
	~ \hspace{0.03\textwidth}
	\begin{subfigure}[t]{0.5\textwidth}
		\centering
		\includegraphics[scale=0.5]{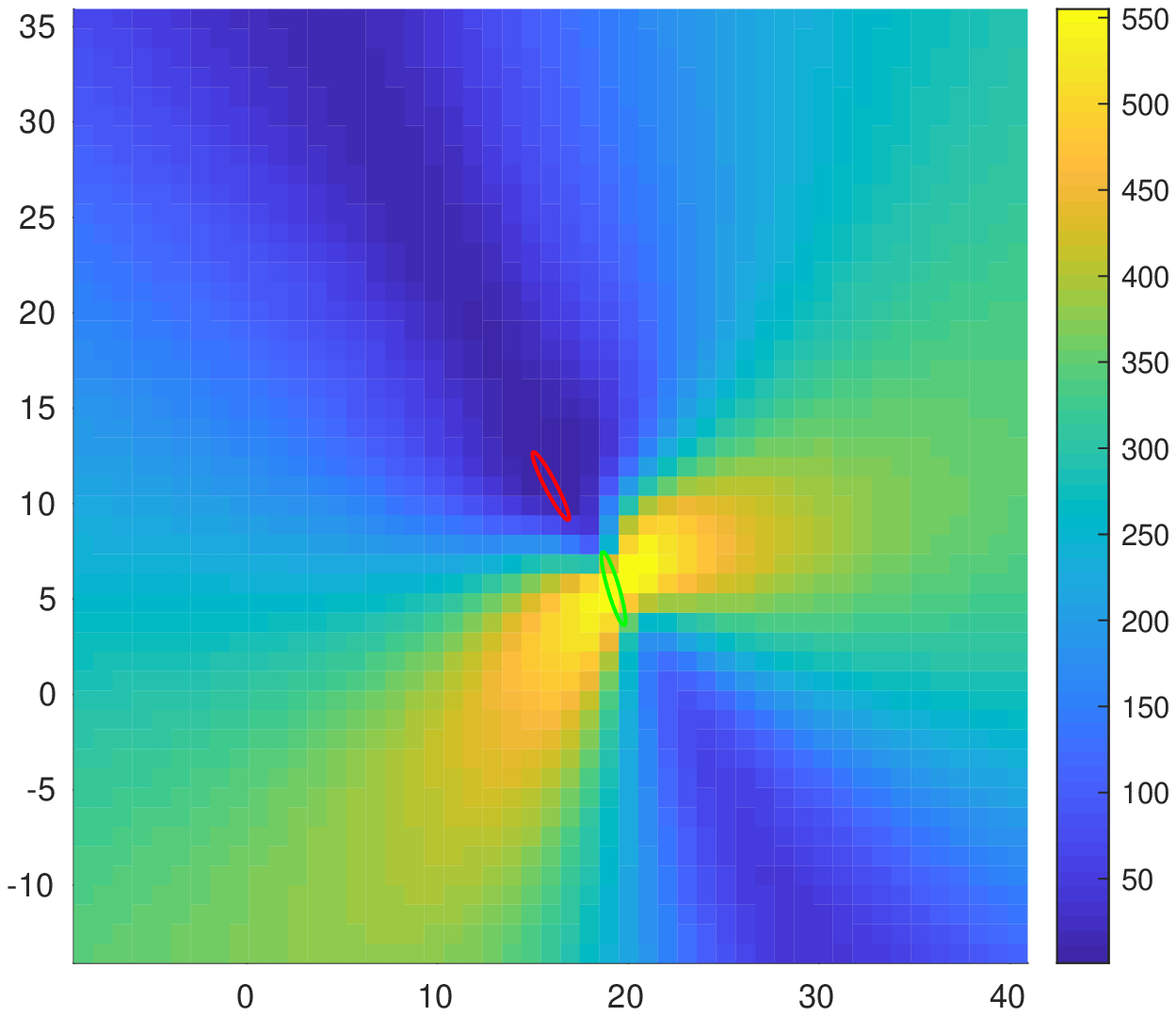}
		\caption{$\sigma_0 = 0.1$}
	\end{subfigure}
	~ \hspace{0.03\textwidth}\\
	
	%
	\caption{Plot of the imaging functional $\mathcal{I}_2$ that the fish $\FF{1}$ uses to track $\FF{2}$.}
	\label{fig:imagingfunc}
\end{figure}

\begin{figure}[h]
	\begin{subfigure}[t]{1\textwidth}
		\centering
		\includegraphics[scale=0.8]{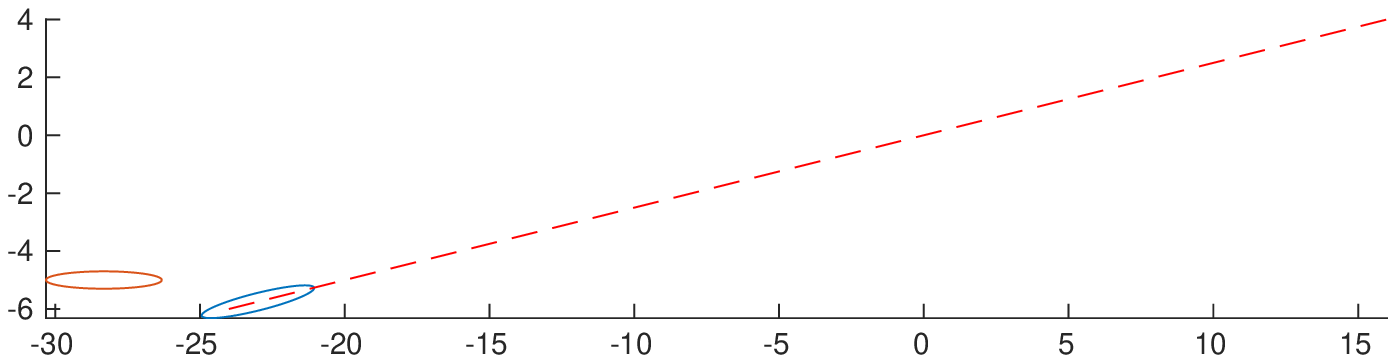}
		\caption{Starting position.}
	\end{subfigure}
	~ \hspace{0.03\textwidth}\\
	\begin{subfigure}[t]{0.8\textwidth}
		\centering
		\includegraphics[scale=0.75]{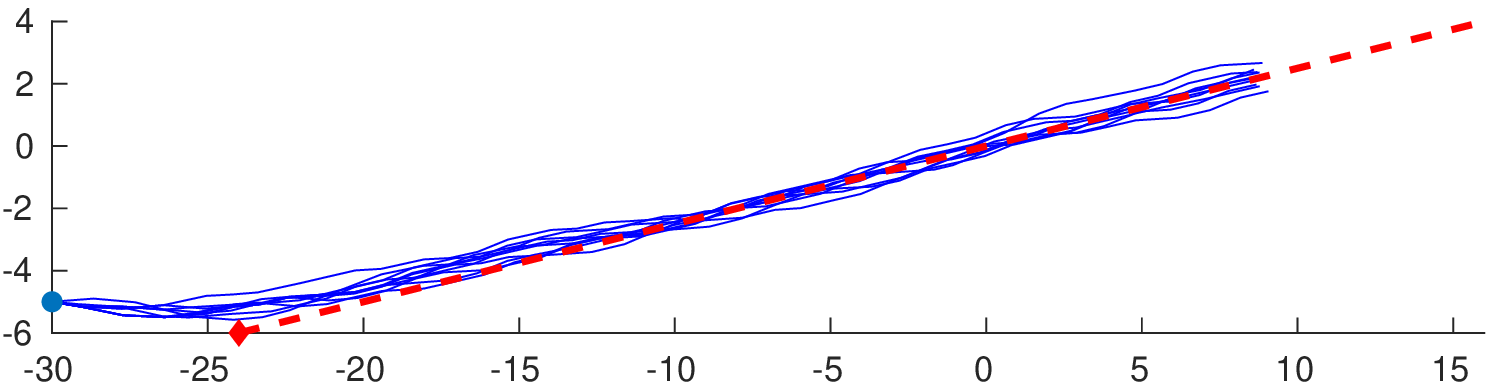}
		\caption{$\sigma_0 = 0.01$}
	\end{subfigure}
	~ \hspace{0.03\textwidth}\\
	\begin{subfigure}[t]{0.8\textwidth}
		\centering
		\includegraphics[scale=0.75]{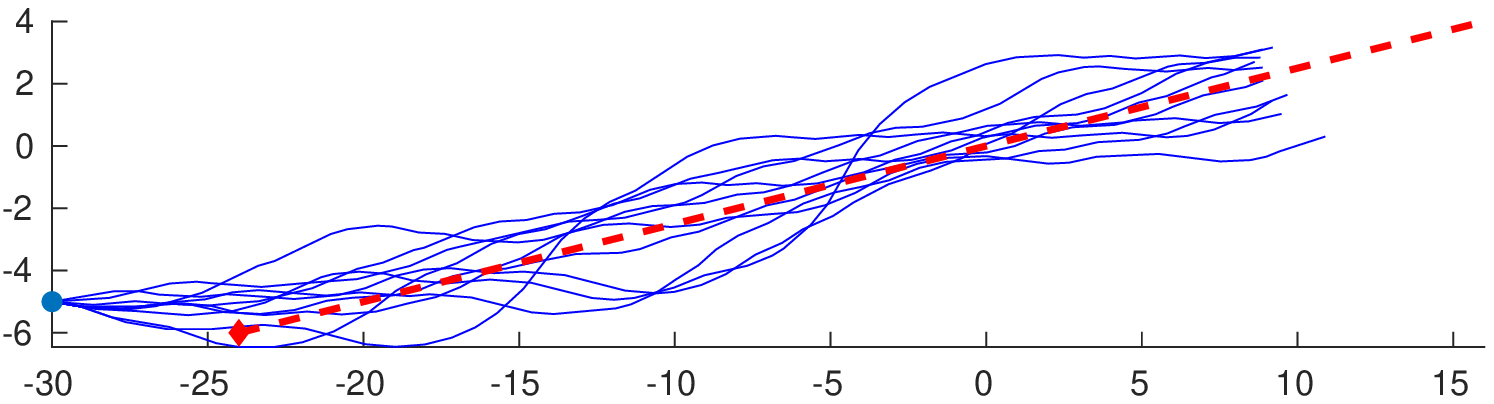}
		
		\caption{$\sigma_0 = 0.05$}
	\end{subfigure}
	~ \hspace{0.03\textwidth}\\
%
\begin{subfigure}[t]{0.8\textwidth}
	\centering
	\includegraphics[scale=0.75]{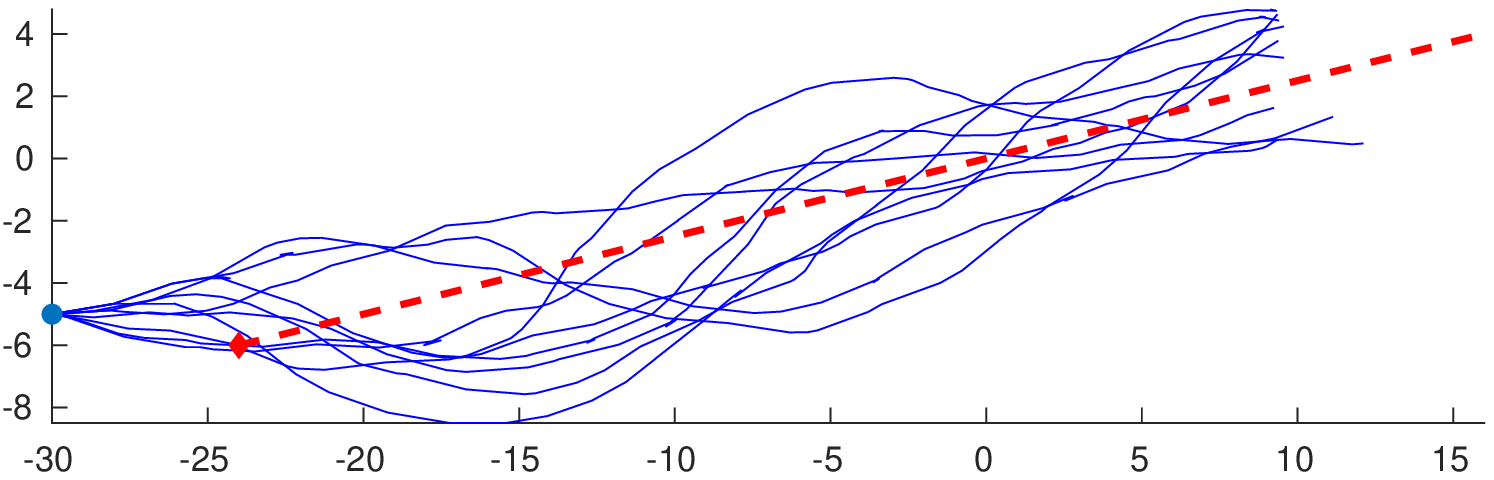}
	
	\caption{$\sigma_0 = 0.2$}
\end{subfigure}
%
	\caption{Plot of the linear trajectory tracking. $N_{\text{exp}} = 10$ trials have been considered.}
	\label{fig:traj_track_linear}
\end{figure}

\newpage

\begin{figure}[h]
	\begin{subfigure}[t]{0.5\textwidth}
		\centering
		\includegraphics[scale=0.6]{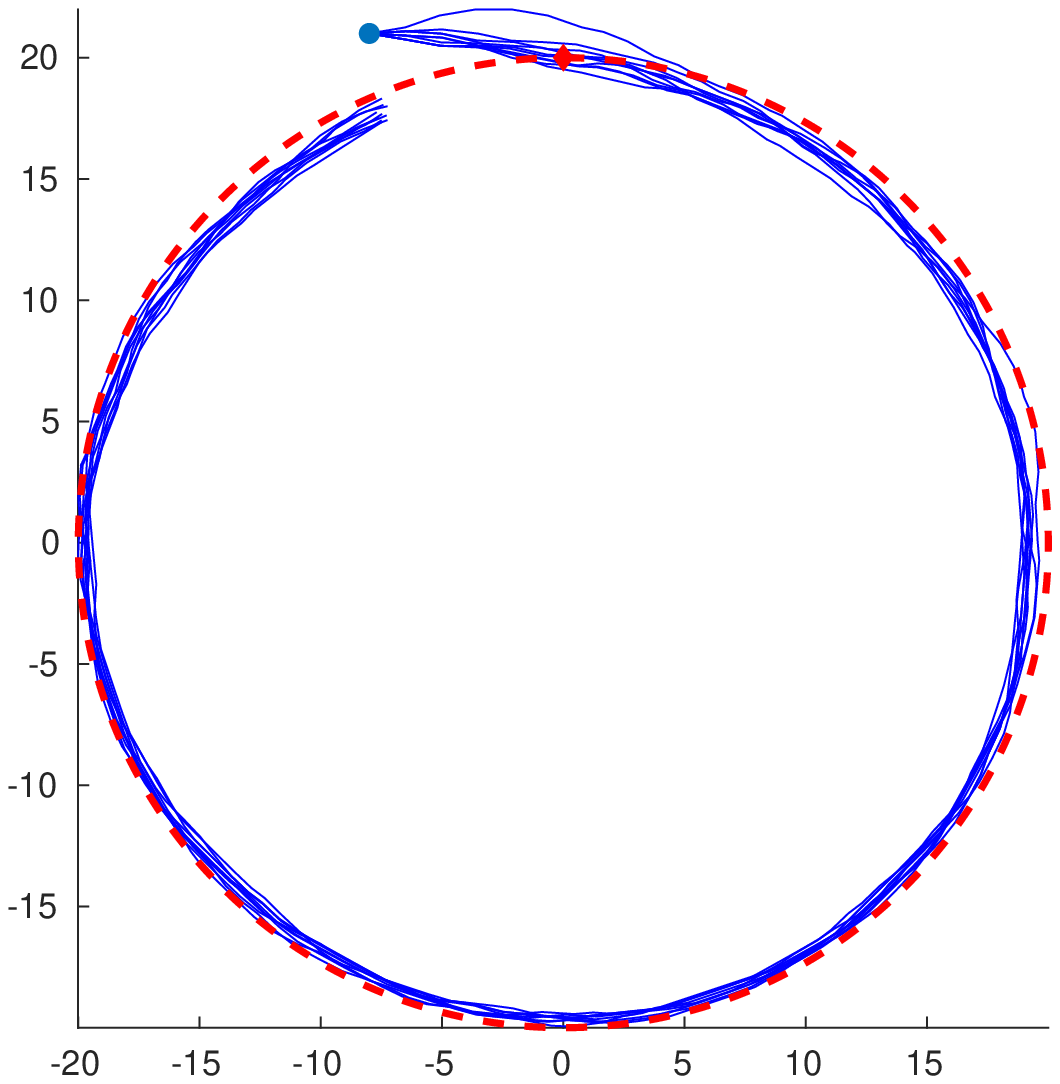}
		\caption{$\sigma_0 = 0.01$}
	\end{subfigure}
	~ \hspace{0.03\textwidth}
	\begin{subfigure}[t]{0.5\textwidth}
		\centering
		\includegraphics[scale=0.6]{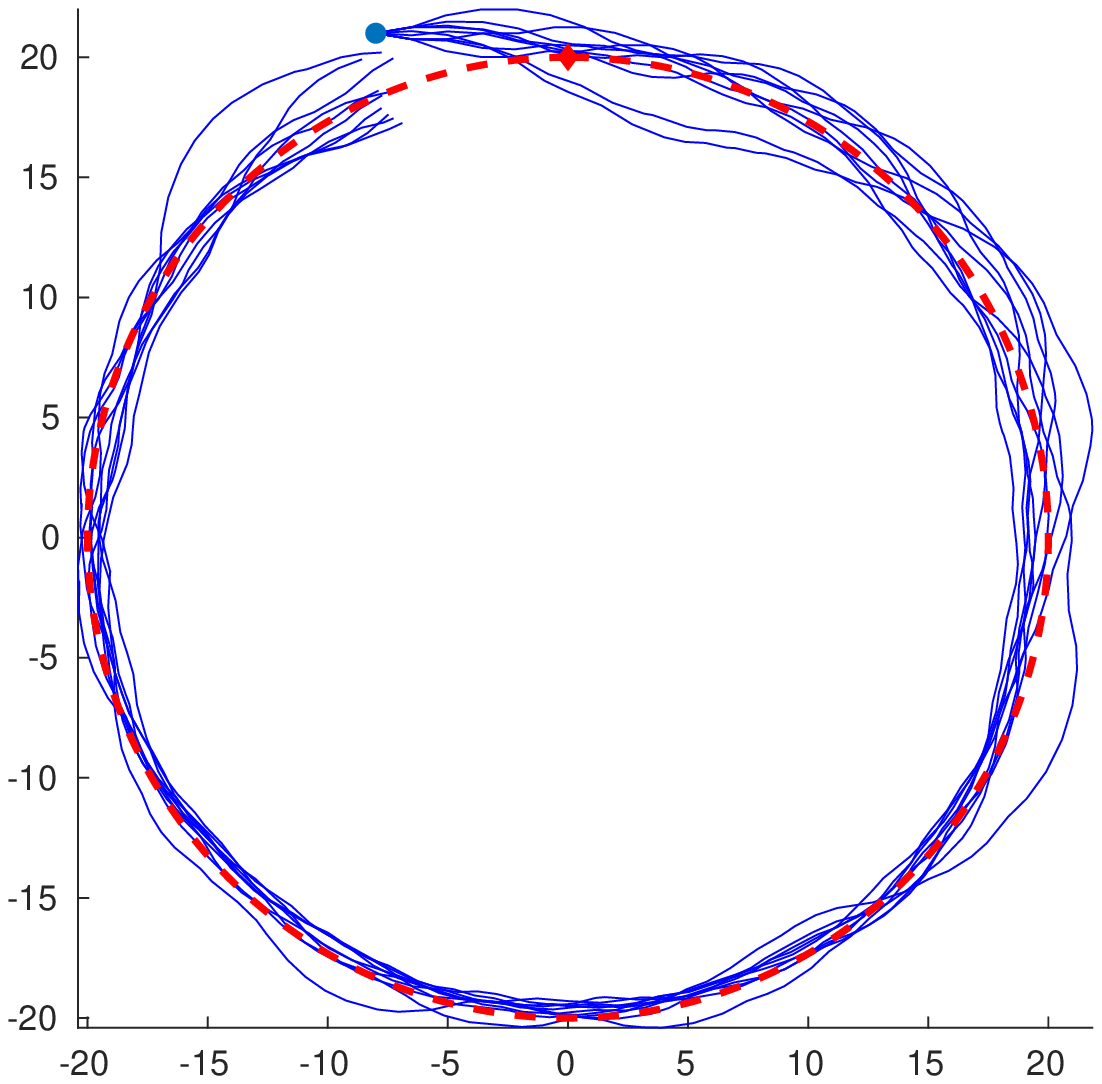}
		
		\caption{$\sigma_0 = 0.05$}
	\end{subfigure}
\\	
%
	\begin{subfigure}[t]{0.5\textwidth}
	\centering
	\includegraphics[scale=0.6]{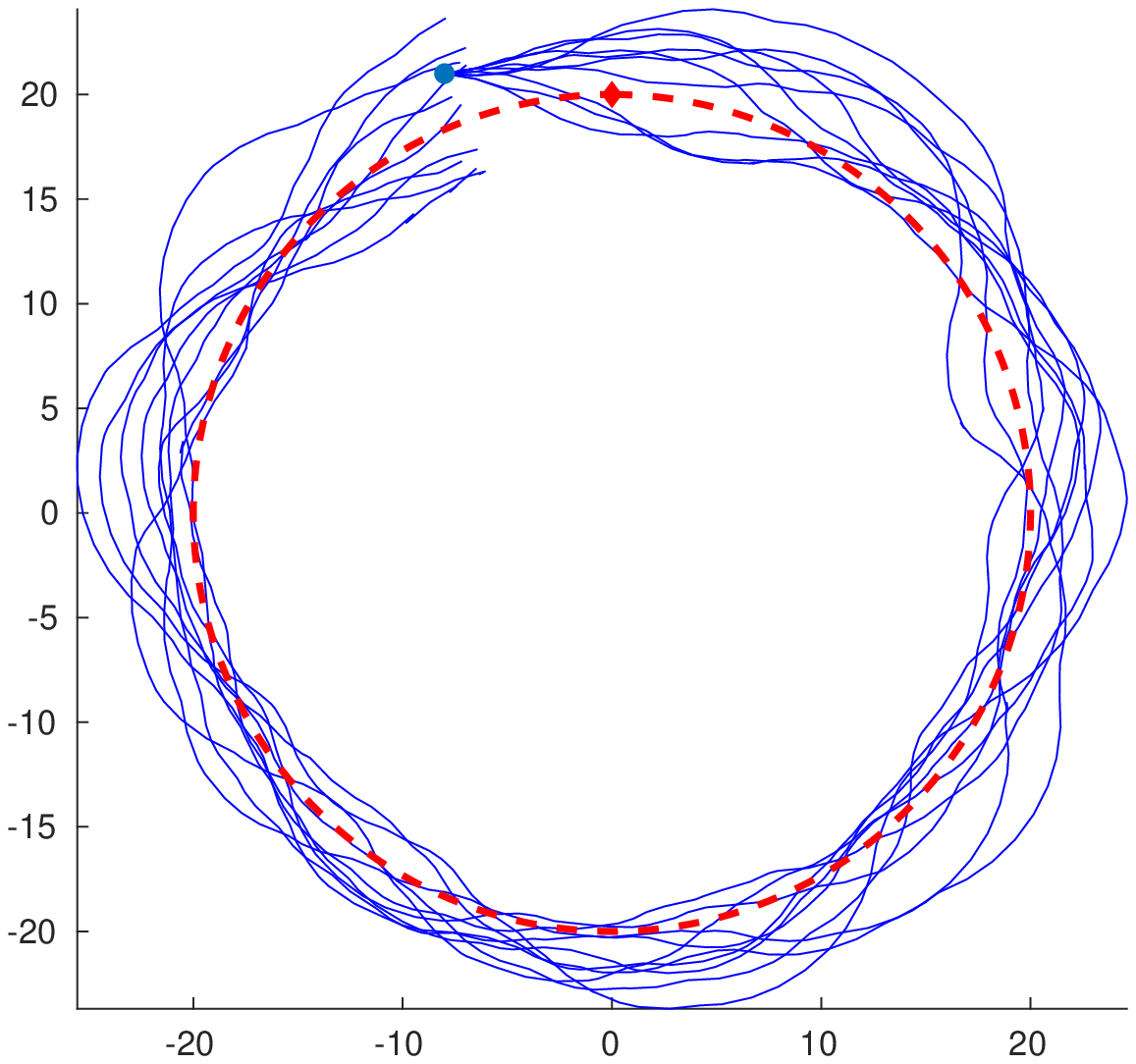}
	
	\caption{$\sigma_0 = 0.2$}
\end{subfigure}
%
	\caption{Plot of the trajectory tracking when the leading fish is swimming in circle, clockwisely. $N_{\text{exp}} = 10$ trials/realizations have been considered.}
	\label{fig:traj_track_circle}
\end{figure}

\noindent The algorithm $\texttt{MUSIC\_dipoleSearch}$ employs a multi-position dipole search that uses $M$ positions in between.  In our numerical simulations, in order to have a real-time tracking procedure, we have set $M = 5$. We have set $\theta_{\max} = 10^{-1}$.

\subsection{Electro-sensing}

While the fish can sense the presence of other conspecifics at some distance, the range for the active electrosensing is much more short \cite{kramer}.

The effectivity of the estimated position of a small dielectric target inevitably depends on the relative distances among the fish, its conspecifics and the target. However, this seems perfectly reasonable. We have to require 
\begin{enumerate}
\item The two fish do not get too close to each other;
\item The small dielectric target $D$ is in the electro-sensing range of $\mathfrak{F}_1$.
\end{enumerate}

If the above qualitative conditions are not met, there is no garantee that we can get accurate results.

\medskip

We perform many experiments to show that the MUSIC-type algorithm proposed in \cite{Am2} works under the conditions outlined above. Based on approximation \eqref{eq:dipole_approx_Im_pp} we consider the illumination vector
\[g(z) =  \left ( \nabla \wh{U}_1(z) \cdot \nabla_z \left (\frac{\p \Gamma}{\p \nu_x} \right )(x_1,z), \dots , \nabla \wh{U}_1(z) \cdot \nabla_z \left (\frac{\p \Gamma}{\p \nu_x} \right )(x_{N_r},z) \right )^T ,\]
and define the MUSIC localizer as follows:
\begin{equation}
\label{eq:MUSIC_target}
\mathcal{I}_1(z) = \frac{1}{|(I - P)\widetilde{g}(z)|} ,
\end{equation}
where $\widetilde{g} = \frac{g}{|g|}$ and $\widehat{U}_1$ is the solution to \eqref{eq:model_hat_u1_bg_test}.

\medskip
\begin{algorithm}[H]
\Input{The feedback, that is the total electric potential signal $u(x,t)$ recorded by the receptors on $\p \Omega_1$.}

\medskip

\nl Decompose the feedback $u$ into $u_1$ and $u_2$ using signal separation techniques \;

\nl $\texttt{MUSIC\_target}(u_1 |_{\p \Omega_1}, \Omega_1)$ :\\ \Indp
\nl Post-process the data $\text{Im}\, u_1 |_{\p \Omega_1}$ \;
\nl Build the SFR $\mathbf{S}_{noise}$ for the  post-processed data  \;
\nl Build and evaluate the MUSIC localizer $\mathcal{I}_1$ on the nodes of a fine uniform grid $\mathcal{G}$ in the vicinity of $\Omega_1$ \;

\nl $\wh{z}  \leftarrow \; \arg \max_{\mathcal{G}} \; \mathcal{I}_1(z)$ \;

\medskip
\medskip
\Output{An approximated position of the target.}
 \caption{SF MUSIC: Detection of a small dielectric target in the presence of another conspecific}
\label{alg:detection_algorithm2}
\end{algorithm}

%

\begin{figure}[h]
	\begin{subfigure}[t]{0.5\textwidth}
		\centering
		\includegraphics[scale=0.55]{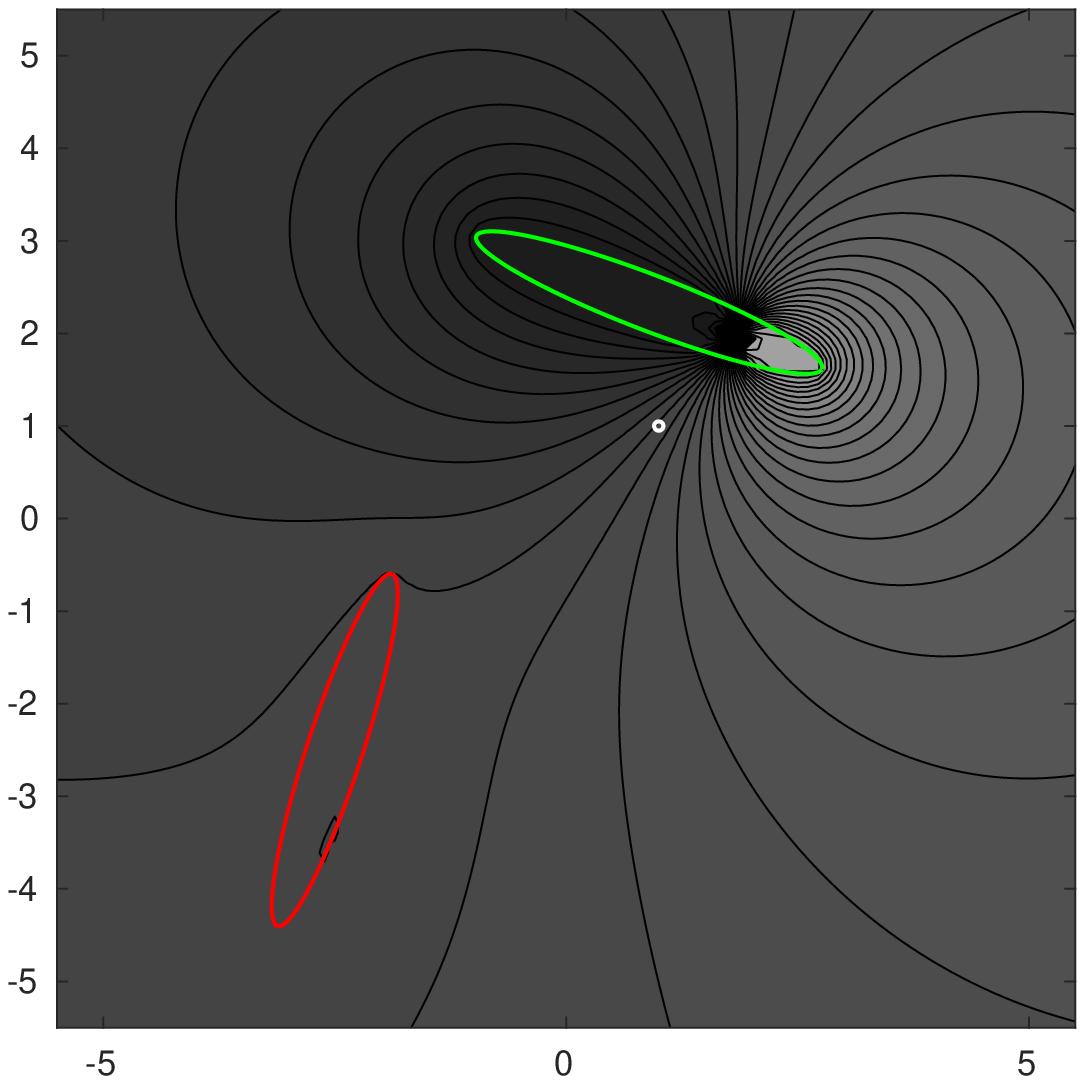}
		\caption{Plot of $\Re u_1$.}
	\end{subfigure}
	~ \hspace{0.03\textwidth}
	\begin{subfigure}[t]{0.5\textwidth}
		\centering
		\includegraphics[scale=0.55]{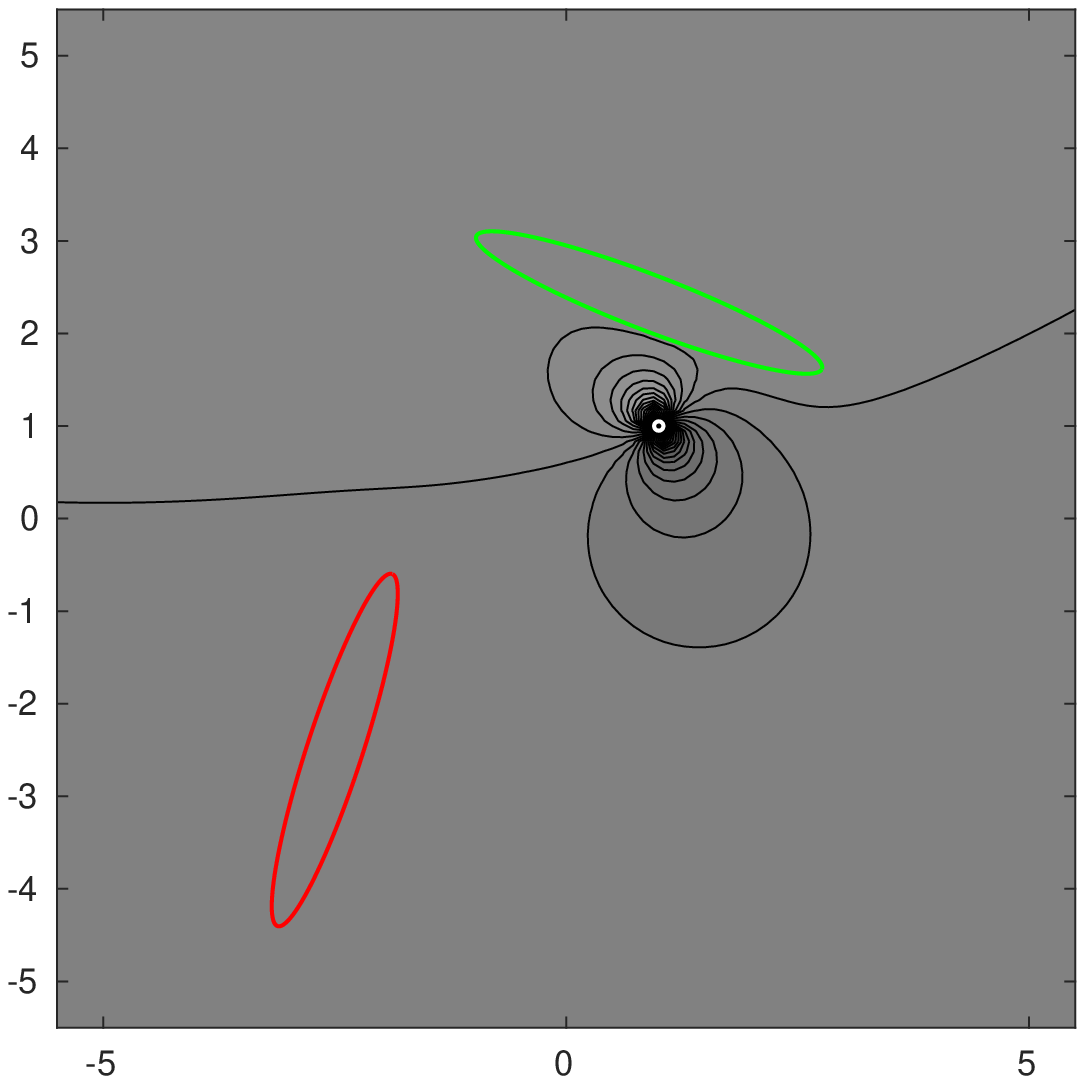}
		
		\caption{Plot of $\Im u_1$.}
	\end{subfigure}
	\caption{Plot of the isopotential lines when $\FF{2}$ (red) is passive (electrically silent) and  $\FF{1}$ (green) is active.}
	\label{fig:field}
\end{figure}

\begin{figure}[h]
    \begin{subfigure}[t]{0.5\textwidth}
        \centering
        \includegraphics[scale=0.40]{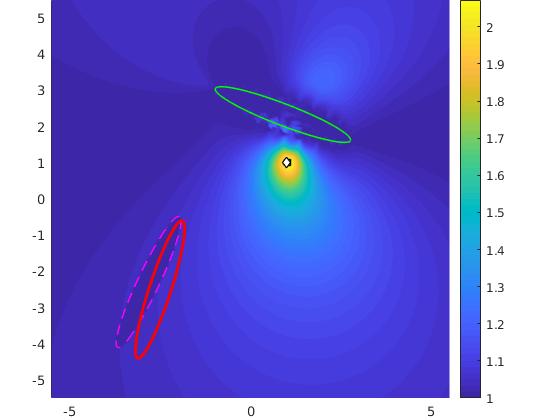}
        \caption{}
    \end{subfigure}
    ~ \hspace{0.03\textwidth}
    \begin{subfigure}[t]{0.5\textwidth}
        \centering
        \includegraphics[scale=0.40]{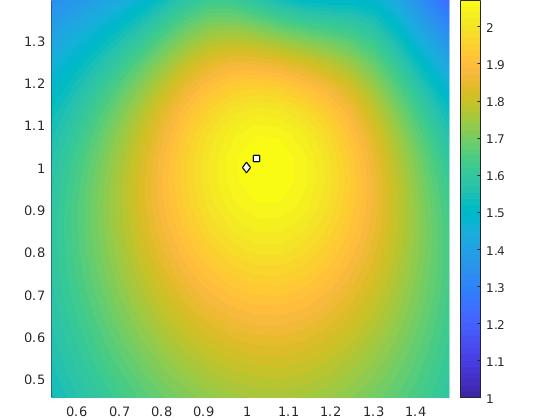}

        \caption{}
    \end{subfigure}
      \caption{Plot of the MUSIC imaging functional used in Algorithm \ref{alg:detection_algorithm2} by using $N_r = 32$ receptors and $N_f = 100$ frequencies, with noise level $\sigma_0 = 0.1$. The square and the diamond indicate the approximation of the center and the true center of the target D, respectively. $\FF{1}$ (green) can image the target despite the presence of $\FF{2}$ (red), which is estimated by applying Algorithm \ref{alg:detection_algorithm1}.}
	\label{fig:imaging_peaks}
\end{figure}

\newpage

\nocite{*}

\section{Concluding remarks}

In this paper, we have formulated the time-domain model for a shoal of weakly electric fish. We have shown how the jamming avoidance response can be interpreted within this mathematical framework and how it can be exploited to design communication systems, following strategies and active electrosensing algorithms.
In a forthcoming paper, we plan to extend our present approach to develop navigation patterns inspired by the collective behavior of the weakly electric fish.

\section{Acknowledgment}
 The author gratefully acknowledges Prof. H. Ammari for his guidance and the financial support granted by the Swiss National Foundation (grant 200021-172483).

\end{document}